\newcommand{\Xcomment}[1]{}
\newtheorem{theorem}{Theorem}[section]
\newtheorem{lemma}[theorem]{Lemma}
\newtheorem{corollary}[theorem]{Corollary}
\newtheorem{prop}[theorem]{Proposition}
\newcommand{\SEC}[1]{\ref{sec:#1}}  % reference to section
\makeatletter \@addtoreset{equation}{section} \makeatother
\newenvironment{proof}{\noindent{\textbf{Proof}}~}%
{\hfill$\qed$\medskip}
\def\qed{ \ \vrule width.1cm height.3cm depth0cm}
\newenvironment{numitem1}{\refstepcounter{equation}\begin{enumerate}%
\item[(\thesection.\arabic{equation})]}{\end{enumerate}}
\newcommand{\refeq}[1]{(\ref{eq:#1})}  % reference to equation
\renewcommand{\section}{\@startsection{section}{1}{0pt}%
{-3.5ex plus -1ex minus -.2ex}{2.3ex plus .2ex}%
{\normalfont\Large}}
\renewcommand{\subsection}{\@startsection{subsection}{2}{0pt}%
{-3.0ex plus -1ex minus -.2ex}{1.5ex plus .2ex}%
{\normalfont\normalsize\bf}}
\renewcommand{\subsubsection}{\@startsection{subsubsection}{2}{0pt}%
{-2.0ex plus -1ex minus -.2ex}{-2.0ex plus .2ex}%
{\normalfont\normalsize\underline}}
\def\rest#1{_{\,\vrule height 1.5ex width 0.05em depth 0pt\, #1}}
\def\Rset{{\mathbb R}}
\def\Zset{{\mathbb Z}}
\def\Zset{{\mathbb Z}}
\def\Rsetge{{\mathbb R_{\ge 0}}}
\def\Zsetge{{\mathbb Z_{\ge 0}}}
\def\Cscr{{\cal C}}
\def\Fscr{{\cal F}}
\def\Hscr{{\cal H}}
\def\Lscr{{\cal L}}
\def\Mscr{{\cal M}}
\def\Pscr{{\cal P}}
\def\tilde{\widetilde}
\def\hat{\widehat}
\def\bar{\overline}
\def\eps{\varepsilon}
\def\dist{{\rm dist}}
\def\inter{{\rm Int}}
\def\bd{{\rm bd}}
\begin{document}

\baselineskip=15pt
\parskip=2pt

\title{An efficient algorithm for packing cuts and (2,3)-metrics
              in a planar graph with three holes}
\author{
Alexander V. Karzanov
\thanks {Institute for System Analysis at FRC Computer Science and
Control of the RAS, 9, Prospect 60 Let Oktyabrya, 117312 Moscow, Russia; email:
akarzanov7@gmail.com. } }

%\date{\today}
\date{}

\maketitle

 \begin{abstract}
We consider a planar graph $G$ in which the edges have nonnegative integer
lengths such that the length of every cycle of $G$ is even, and three faces are
distinguished, called \emph{holes} in $G$. It is known that there exists a
packing of cuts and (2,3)-metrics with nonnegative integer weights in $G$ which
realizes the distances within each hole. We develop a strongly polynomial
purely combinatorial algorithm to find such a packing.
 \end{abstract}

\bigskip
\noindent \emph{Keywords}: packing problem, strongly polynomial algorithm,
planar graph, cut, (2,3)-metric, shortest path
\smallskip

\noindent {\em MSC 2010}: 90C27, 05C10, 05C12, 05C21, 05C85

%----------------- SEC.1

\section{Introduction} \label{sec:intr}

In combinatorial optimization there are known packing problems on cuts and
metrics that are related via a sort of polar duality to popular
multi(commodity)flow demand problems in graphs. (For a discussion on such a
relationship and some appealing examples, see, e.g.,
\cite[Sect.~4]{kar90b},\cite[Sects.~71,72,74]{sch_book}. The simplest example
is the polar duality between the classical problems of finding an $s$--$t$ flow
of a given value in a capacitated graph $G$ and finding a shortest $s$--$t$
path in $G$ with nonnegative lengths $\ell$ of edges, or, equivalently, finding
a maximal packing of cuts  separating the vertices $s$ and $t$ in $(G,\ell)$.)

In this paper we consider a planar graph $G=(V,E)$ embedded in the plane in
which the edges $e\in E$ have \emph{nonnegative lengths} $\ell(e)\in\Rsetge$
and a subset $\Hscr$ of faces of $G$, called \emph{holes}, is distinguished.
Also we are given (implicitly) a certain set $\Mscr$ of metrics on $V$. Then
the (fractional) \emph{problem of packing metrics realizing the distances on
the holes} for $G,\ell,\Hscr,\Mscr$ consists in the following:

\begin{description}
\item{\textbf{PMP}}: ~Find metrics $m_1,\ldots,m_k\in\Mscr$ and weights
$\lambda_1,\ldots,\lambda_k\in \Rsetge$ such that:
  \begin{eqnarray}
  \lambda_1 m_1(e)+\ldots +\lambda_k m_k(e) \le \ell(e)
            &&\mbox{for each $e\in E$}; \quad\mbox{and} \label{eq:P1} \\
 \lambda_1 m_1(st)+\ldots +\lambda_k m_k(st) = \dist_{G,\ell}(st) &&
     \mbox{for all $s,t\in V_H$, $H\in\Hscr$}. \label{eq:P2}
  \end{eqnarray}
  \end{description}

\noindent Hereinafter we use the following terminology and notation:

(a) ~when it is not confusing, a pair $(x,y)$ of vertices may be denoted as
$xy$;

(b) ~for a face $F$, its \emph{boundary} (regarded as a graph) is denoted by
$\bd(F)=(V_F,E_F)$;

(c) ~a \emph{metric} on $V$ is meant to be a function $m: V\times V\to
\Rsetge\cup\{\infty\}$ satisfying $m(xx)=0$, $m(xy)=m(yx)$, and $m(xy)+m(yz)\ge
m(xz)$ for all $x,y,z\in V$ (admitting $m(xy)=0$ for $x\ne y$);

(d) ~$\dist_{G,\ell}(uv)$ denotes the \emph{distance} in $(G,\ell)$ between
vertices $u,v\in V$, i.e., the minimum length $\ell(P):=\sum(\ell(e)\colon e\in
E_P)$ of a path $P$ connecting vertices $u$ and $v$ in $G$ (where $E_P$ is the
set of edges in $P$); in particular, $\dist_{G,\ell}$ is a metric.
 \medskip

Typically the class $\Mscr$ of metrics figured in PMP is described by fixing
one or more graphs $K=(V_K,E_K)$ and ranging over arbitrary mappings
$\gamma:V\to V_K$. Then each $\gamma$ generates the metric $m_\gamma$ on $V$ by
setting $m_\gamma(xy):=\dist_{K,1}(\gamma(x)\gamma(y))$ for $x,y\in V$ (where 1
stands for the all-unit function on $E_K$). Two special cases of metrics are
important for us:

(i) when $K=K_2$ (the graph with two vertices and one edge), $m=m_\gamma$ is
called a \emph{cut-metric}; in other words, $m$ is generated by a partition
$\{V_1,V_2\}$ of $V$ and establishes distance 0 inside each of $V_1$ and $V_2$,
and 1 between the elements of these subsets;

(ii) when $K=K_{2,3}$ (the complete bipartite graph with parts of 2 and 3
vertices), $m_\gamma$ is called a \emph{(2,3)-metric}; it is generated by a
partition of $V$ into five subsets $S_1,S_2,S_3,T_1,T_2$ and establishes
distance 0 inside each of them, 1 between $S_i$ and $T_j$ ($i=1,2,3$, $j=1,2$),
and 2 otherwise.

Nontrivial integrality results on PMP, mentioned below, have been obtained when
$|\Hscr|$ is ``small'' and the edge length function $\ell$ is \emph{cyclically
even}, which means that $\ell$ is integer-valued and the length $\ell(E_C)$ of
any cycle $C$ in $G$ is even. (For a function $f:S\to\Rset$ and a subset
$S'\subseteq S$, we write $f(S')$ for $\sum(f(e)\colon e\in S')$.)

  \begin{numitem1} \label{eq:HST}
If $\ell$ is cyclically even and $\Mscr$ is the set of cut-metrics on $V$, then
PMP has an \emph{integer solution} (i.e., having integer weights of all
metrics) when $|\Hscr|=1$~\cite{HST} and when $|\Hscr|=2$~\cite{sch}.
 \end{numitem1}
   \begin{numitem1} \label{eq:H=3}
If $|\Hscr|=3$, ~$\ell$ is cyclically even, and $\Mscr$ is formed by cut- and
(2,3)-metrics on $V$, then PMP has an integer solution~\cite{K3hI}.
  \end{numitem1}
   \begin{numitem1} \label{eq:H=4}
If $|\Hscr|=4$, ~$\ell$ is cyclically even, and $\Mscr$ is formed by cut-,
(2,3)- and $4f$-metrics on $V$, then PMP has an integer solution, where a
$4f$-\emph{metric} is generated by a mapping $\gamma:V\to V_K$ with $K$ being a
planar graph with four faces~\cite{K3hI}.
  \end{numitem1}

In fact, the existence of a solution to PMP with real-valued weights $\lambda$
in cases $|\Hscr|=1,2,3$ can be immediately concluded, via polar duality, from
solvability criteria for corresponding fractional multiflow demand problems
given in~\cite{OS},\cite{ok},\cite{K3hII}, respectively, and the essence
of~\refeq{HST}--\refeq{H=4} is just the existence of integer solutions when the
lengths of edges are cyclically even (or, weaker, a half-integer solution when
the lengths are integer-valued). The proof for $|\Hscr|\le 2$ given
in~\cite{sch} is constructive and can be turned into a pseudo-polynomial
algorithm. A strongly polynomial combinatorial algorithm for finding a solution
formed by cut-metrics with integer weights when $|\Hscr|\le 2$ and $\ell$ is
cyclically even is given in~\cite{kar90}.

The purpose of this paper is to devise a strongly polynomial combinatorial
algorithm for $|\Hscr|=3$.

  \begin{theorem} \label{tm:H3alg}
Let $G=(V,E)$ be a planar graph with cyclically even lengths $\ell(e)$ of edges
$e\in E$ and let $\Hscr$ be three distinguished faces of $G$. Then one can
find, in strongly polynomial time, cut- or (2,3)-metrics $m_1,\ldots,m_k$ and
nonnegative integers $\lambda_1,\ldots,\lambda_k\in\Zsetge$
satisfying~\refeq{P1}--\refeq{P2}.
  \end{theorem}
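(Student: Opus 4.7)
The plan is to turn the existence result \refeq{H=3} from \cite{K3hI} into a strongly polynomial algorithm, following the peeling scheme that \cite{kar90} uses for $|\Hscr|\le 2$. The outer loop maintains a current length function $\ell'$ (initially $\ell$) and a current packing (initially empty); at each iteration it finds a cut- or $(2,3)$-metric $m$ that is tight on the distance equalities \refeq{P2}, computes the largest integer weight $\lambda$ with $\ell''(e):=\ell'(e)-\lambda\, m(e)\ge 0$ for all $e$, appends $(m,\lambda)$ to the packing, replaces $\ell'$ by $\ell''$, and recurses. Termination is reached when $\ell'\equiv 0$.

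The first step is preprocessing by planar shortest-path potentials. For each hole $H\in\Hscr$ choose a base vertex $v_H\in V_H$ and compute the potential $d_H(v)=\dist_{G,\ell}(v_H,v)$ using a planar shortest-path subroutine. Cyclic evenness of $\ell$ guarantees that the reduced lengths have a consistent parity, which is what eventually forces the weights $\lambda$ to be positive integers rather than arbitrary small reals, and which is essential to avoid pseudo-polynomial behavior as in \cite{kar90}. The three potentials $d_{H_1},d_{H_2},d_{H_3}$, combined with the planar embedding, provide the combinatorial skeleton on which candidate metrics are searched.

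The second step is locating a tight metric in each iteration. For a cut-metric the search reduces to finding a level set of a suitable linear combination of the $d_{H_i}$ that yields a bipartition $\{V_1,V_2\}$ saturating \refeq{P2}; this is essentially the two-hole procedure of \cite{kar90}. For a $(2,3)$-metric one must produce a 5-partition $\{S_1,S_2,S_3,T_1,T_2\}$ of $V$ whose induced metric matches $\dist_{G,\ell'}$ on every hole pair. The natural candidates are indexed by a discrete invariant of each vertex derived from the triple $(d_{H_1}(v),d_{H_2}(v),d_{H_3}(v))$ modulo $2$, together with the cyclic order in which the $H_i$ appear around the planar embedding. Planarity with only three holes restricts the number of combinatorially distinct such partitions to a polynomial, so enumeration and testing can be done in strongly polynomial time.

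The main obstacle, and the point at which the analysis will be most delicate, is proving strong polynomiality of the overall loop. Peeling a cut always saturates at least one edge, giving a natural decrement of $|E|$; peeling a $(2,3)$-metric, however, need not saturate any single edge outright, and a naive count of iterations could blow up. To control this I would couple the peeling with a planar contraction step: after each update, contract every edge with $\ell''(e)=0$ and identify the resulting multi-edges, and, whenever a hole boundary collapses, fall back to the $|\Hscr|\le 2$ algorithm of \cite{kar90}. A potential-function argument, modeled on that of \cite{kar90}, should then show that a combinatorial parameter (for instance, the joint number of distinct $(d_{H_1},d_{H_2},d_{H_3})$-classes, or the number of faces of the contracted planar dual) strictly decreases each iteration, bounding the total number of iterations by a polynomial in $|V|+|E|$. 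Correctness of each peeling step is underwritten by \refeq{H=3} applied to the current $(G,\ell')$, which guarantees that a tight cut- or $(2,3)$-metric exists as long as $\ell'\not\equiv 0$ satisfies the hypotheses.
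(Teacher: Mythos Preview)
Your outline is a plausible heuristic, but it has a genuine gap at exactly the place you flag: neither the search for a tight $(2,3)$-metric nor the termination bound is actually supplied. The claim that ``planarity with only three holes restricts the number of combinatorially distinct such partitions to a polynomial'' is unsupported; a $(2,3)$-metric is a five-block partition with no a priori planarity constraint on the blocks, and the parity-and-cyclic-order invariant you propose does not obviously pin one down. Likewise, appealing to \refeq{H=3} tells you an integer packing exists, but not that any single metric in it is detectable by your enumeration, nor that peeling it strictly decreases a polynomial-size potential. Your sketch essentially defers both hard steps to ``a potential-function argument modeled on \cite{kar90} should then show\ldots'', which is where the real work lies.

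The paper's route avoids these difficulties altogether by never searching for a $(2,3)$-metric in a general instance. Instead it performs three rounds of \emph{cut-only} reductions (Reductions I, II, III in Sects.~\SEC{red_cuts}--\SEC{edge_free}), each time calling the two-hole algorithm of \cite{kar90} on an auxiliary subproblem to manufacture a reducible family of cuts, and each time arguing that a concrete combinatorial quantity (number of excessive pairs, number of non-tight triples $\eps(v|st)$, etc.) strictly drops. After these reductions the graph collapses to three internally disjoint $s$--$s'$ paths of equal length (Proposition~\ref{pr:without_faces} and Sect.~\SEC{final}); on this trivial instance the $(2,3)$-metrics are written down explicitly by formula~\refeq{23metrics}. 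So the $(2,3)$-metrics enter only at the very end, in closed form, and the strong polynomiality comes entirely from bounding the number of cut reductions. If you want to pursue a peeling approach, you would need to replace your enumeration sketch with a concrete structural description of which $(2,3)$-metrics can appear and a verifiable monovariant for the loop.
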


Note that this theorem is analogous, in a sense, to a result for arbitrary
graphs in~\cite{kar90b} where a strongly polynomial combinatorial algorithm is
developed that, given a graph $G=(V,E)$ with cyclically even lengths of edges
and a distinguished set $T\subset V$ of five terminals, finds an integer
packing of cuts and (2,3)-metrics realizing the distance for each pair of
terminals.

Our algorithm yielding Theorem~\ref{tm:H3alg} is given throughout
Sects.~\SEC{simpl}--\SEC{final} (in fact, we give an alternative proof
of~\refeq{H=3}). The main part of the algorithm involves three sorts of good
reductions by cuts, called \emph{Reductions I,II,III} and described in
Sects.~\SEC{red_cuts},\SEC{neclac},\SEC{edge_free}, respectively. Here by a
\emph{reduction by cuts}, we mean finding certain cut-metrics with integer
weights and accordingly reducing the current lengths $\ell$, and we say that
the reduction is \emph{good} if combining these cut-metrics with a solution to
PMP for $G,\Hscr$ and the reduced lengths $\ell'$, we obtain a solution for
$\ell$. During the algorithm, as soon as the length of some edge becomes zero,
this edge is immediately contracted. We explain that Reductions I,II,III are
implemented in strongly polynomial time (but do not care of precisely
estimating and decreasing the time bound). Eventually we obtain a graph formed
by three paths having the same pair of endvertices and equal lengths. Then the
distance in $(G,\ell)$ is represented as the sum of weighted (2,3)-metrics and,
possibly, one cut-metric.

%-------------------------- SEC 2

\section{Initial reductions } \label{sec:simpl}

We start with additional terminology, notation and conventions.

1) Paths, cycles, subgraphs and other objects in a planar graph $G=(V,E)$ are
naturally identified with their images in the plane or sphere. A path
$P=(v_0,e_1,v_1, \ldots,e_k,v_k)$ from $x=v_0$ to $y=v_k$ is called an $x$--$y$
\emph{path}. When both $x,y$ belong to (the boundary of) a hole $H\in\Hscr$, we
also say that $P$ is an $H$-\emph{path}. When it is not confusing, we may use
notation $P=v_0v_1\cdots v_k$ (via vertices) or $P=e_1e_2\cdots e_k$ (via
edges). For a simple path $P$, its subpath with endvertices $u$ and $v$ may be
denoted as $P[u,v]$ (as a rule, but not always, we assume that $u,v$ occur in
this order in $P$). Also for paths $P=v_0v_1\cdots v_k$ and $P'=v'_0v'_1\cdots
v'_k$ with $v_k=v'_0$, we write $P\cdot P'$ for the concatenated path
$v_0v_1\cdots v_kv'_1\cdots v'_k$. When $x=y$ and $|E_P|>0$, ~$P$ becomes a
\emph{cycle}. When needed, paths/cycles will be regarded up to reversing.

2) The set of faces of $G$ is denoted by $\Fscr_G$. A face $F$ is regarded as a
\emph{closed} region in the plane or sphere (i.e., including the boundary
$\bd(F)=(V_F,E_F)$), and the \emph{interior} $F-\bd(F)$ of $F$ is denoted as
$\inter(F)$.  The boundary $\bd(F)$ may be identified with the corresponding
cycle. Usually  the unbounded face of $G$ is assumed to be a hole.

3) We say that $V_\Hscr:=\cup(V_H\colon H\in\Hscr)$ is the set of
\emph{terminals}. The other vertices of $G$ are called \emph{inner}. Also we
address the adjective \emph{inner} to the faces in $\Fscr_G-\Hscr$ and to the
edges not contained in the boundaries of holes.

4) Usually we will abbreviate the distance function $\dist_{G,\ell}$ to $d$.
Note that the cyclic evenness of $\ell$ implies that for any edge $e=uv$ of
$G$, the integers $\ell(e)$ and $d(uv)$ have the same parity. The distances
$d(xy)$ for all $x,y\in V$ are computed in the beginning of the algorithm and
updated when needed.
\medskip

We assume that $G,\ell,\Hscr$ satisfy the following conditions, which will
simplify our description, leading to no loss of generality in essence:
\begin{description}
\item{(C1):} ~$G$ is connected and has no loops and parallel edges, and the cycle
$\bd(F)$ is simple for each face $F$; in particular $|V_F|\ge 3$ for each face
$F$.
  \end{description}

\noindent (For otherwise we can make easy reductions of the problem, preserving
the cyclic evenness.) The properties in~(C1) will be default maintained during the
algorithm. One more useful simplification is as follows.
\begin{description}
\item{(OP1)} ~In the current graph, if there appears an edge $e=uv$ with
$\ell(e)=0$, then we immediately contract this edge (identifying the vertices
$u$ and $v$).
\end{description}

Also at the \emph{preprocessing stage} of the algorithm,
operations~(OP2)--(OP5) described below are applied, step by step, in an
arbitrary order. To describe them, we need additional definitions and notation.

For a face $F$, let $\Pi_F$ denote the set of all pairs $x,y\in V_F$ of
vertices in $F$. Due to condition~\refeq{P2}, an important role is played by
the set of terminal pairs $\cup (\Pi_H \colon H\in\Hscr)$, denoted as
$\Pi_\Hscr$. We say that an $s$--$t$ path $P$ in $G$ is an
$\Hscr$-\emph{geodesic} (resp. an $H$-\emph{geodesic} for $H\in\Hscr$) if $P$
is shortest w.r.t. $\ell$ and $st\in\Pi_\Hscr$ (resp. $st\in\Pi_H$). For
vertices $x,y,z\in V$, define the values
  \begin{gather}
\eps(x|yz):=d(xy)+d(xz)-d(yz);\quad\mbox{and}  \nonumber\\
  \Delta(xy):= \min\{d(sx)+d(xy)+d(yt)-d(st)\colon st\in \Pi_\Hscr\} \label{eq:excess}
  \end{gather}
(which are efficiently computed when needed). For brevity we write $\Delta(x)$
for $\Delta(xx)$. Clearly each $\Delta(xy)$ is nonnegative and even, and we say
that $xy$ (resp. $x$) is \emph{tight} if $\Delta(xy)=0$ (resp. $\Delta(x)=0$).
In operations (OP2)--(OP4) we decrease the current length $\ell$, trying to
make the values $\Delta$ as small as possible while preserving the cyclical
evenness and the original distances $d$ on $\Pi_\Hscr$.

\begin{description}
\item{(OP2)} ~Suppose that there exists (and is chosen) a non-tight vertex $x$
(this is possible only if $x$ is inner). Then we decrease the length of each
edge $e\in E$ incident to $x$ by $\min\{\ell(e),\Delta(x)/2\}$ (which is a
positive integer).
 \end{description}
As a result, at least one of the following takes place: (i) $\ell(e)$ becomes 0
for some edge $e$ incident to $x$, or (ii) $\Delta(x)$ becomes 0. In case~(i),
we contract $e$ (by applying (OP1)), and if $\Delta(x)$ is still nonzero,
repeat (OP2) with the same $x$.

\begin{description}
\item{(OP3)}
~Suppose that there exists an edge $e=xy\in E$ with $\Delta(xy)>0$. Then we
reduce the length $\ell(e)$ to the minimal nonnegative integer $\alpha$ so that
$\alpha$ and $\Delta(xy)$ have the same parity and $d(sx)+\alpha+d(yt)\ge
d(st)$ for all $st\in\Pi_\Hscr$.
 \end{description}

\begin{description}
\item{(OP4)}
~Suppose that there are two different vertices $x,y$ in an inner face $F$ which
are not adjacent in $G$ and such that $\Delta(xy)>0$. Then we connect $x,y$ by
edge $e$, inserting it inside $F$ (thus subdividing $F$ into two inner faces),
and assign the length $\ell(e)$ in the same way as in~(OP3).
 \end{description}

Clearly (OP3) and (OP4) preserve both the cyclical evenness of lengths and the
distances on $\Pi_\Hscr$, and we can see that
  \begin{numitem1} \label{eq:ex01}
if the new value $\Delta(xy)$ is still nonzero, then $\ell(e)=1$
  \end{numitem1}
(taking into account that $e$ is contracted when $\ell(e)=0$). Note that if
none of (OP3) and (OP4) is applicable, then each edge $e=xy$ satisfies
$\ell(e)=d(xy)$. Moreover, one can see the following useful property:

\begin{description}
\item{(C2):}~
for each face $F$ and vertices $x,y\in V_F$, the pair $xy$ is tight.
 \end{description}
Indeed, suppose that $xy$ is non-tight for some $x,y\in V_F$. This is possible
only if $xy$ is an edge in an inner face $F$. By~\refeq{ex01}, $\ell(xy)=1$.
Take a vertex $z\in V_F$ different from $x,y$ (existing by~(C1)). Since
$d(zx)+d(zy)+\ell(xy)$ is even and $\ell(xy)=1$, either
$d(zx)=d(zy)+\ell(xy)\ge 1+1=2$ or $d(zy)=d(zx)+\ell(xy)\ge 2$. For
definiteness, assume the former. Then $zx$ is tight (since $\Delta(zx)>0$ would
imply that $z,x$ are connected by edge $e$ with $\ell(e)=d(zx)=1$,
by~\refeq{ex01}). This implies that there exists an $\Hscr$-geodesic passing
$z,x$, and hence an $\Hscr$-geodesic passing $x,y$. Then $\Delta(xy)=0$; a
contradiction.

The final operation is intended for getting rid of ``redundant'' edges.
\begin{description}
\item{(OP5)}~
Suppose that some face $F$ contains a \emph{dominating} edge $e=xy$, which
means that $d(xy)=\ell(P)$, where $P$ is the $x$--$y$ path in $\bd(F)$ not
containing $e$ (so $\bd(F)=P\cup\{e\}$). Then we delete $e$ from $G$ (thus
merging $F$ with the other face containing $e$ and preserving the distance
$d$).
 \end{description}

Let $\ell,d,\Delta$ be the corresponding functions obtained upon termination of
the preprocessing stage. Then~(C1) and~(C2) hold, and
\begin{description}
\item{(C3):} ~no face of $G$ has a dominating edge.
  \end{description}

The preprocessing stage has at most $|E|+2|V|^2$ operations. Indeed, let $\eta$
be the current number of non-tight pairs; then $\eta\le|V|^2$. During the
process, the values of $\ell$ and $d$ are non-increasing, and $d$ preserves on
$\Pi_\Hscr$. Then $\eta$ is non-increasing as well. Moreover, (OP1) and~(OP5)
decrease the current $|E|$, ~(OP2) and~(OP3) decrease $\eta$ and do not
increase $|E|$, and~(OP4) decreases $\eta$ though increases $|E|$ by 1. So the
value $|E|+2\eta$ is monotone decreasing, yielding the desired bound.

%-------------------------- SEC. 3

\section{Reduction I} \label{sec:red_cuts}

It this section we further simplify $(G,\ell,\Hscr)$ by using the  algorithm
from~\cite{kar90} which finds a packing of cuts realizing the corresponding
distances in the two-hole case.

For $X\subset V$, define $\delta X=\delta_G X$ to be the set of edges of $G$
connecting $X$ and $V-X$, referring to it as the \emph{cut} generated by $X$
(or by $V-X$), and define $\rho X=\rho_\Hscr X$ to be the set of pairs $st\in
\Pi_\Hscr$ \emph{separated} by $X$, i.e., $s\ne t$ and $|\{s,t\}\cap X|=1$. The
cut $\delta X$ is associated with the cut-metric corresponding to the the
partition $\{X,V-X\}$.

Let $\chi^{E'}$ denote the incidence vector of a subset $E'\subset E$, i.e.,
$\chi^{E'}(e)=1$ if $e\in E'$, and 0 if $e\in E-E'$.
 \medskip

\noindent\textbf{Definition.} ~Let $\Cscr$ be a collection of cuts $\delta X$
in $G$ equipped with weights $\lambda(X)\in \Zsetge$. We call $(\Cscr,\lambda)$
\emph{reducible} if the function $\ell':=\ell-\sum(\lambda(X)\chi^{\delta X}:
\delta X\in\Cscr)$ (of \emph{reduced lengths}) is nonnegative and the distance
function $d':=\dist_{G,\ell'}$ satisfies
  \begin{equation} \label{eq:red_dist}
d'(st)=d(st)-\sum(\lambda(X)\colon \delta X\in\Cscr,\, st\in\rho X)
  \quad\mbox{for each $st\in\Pi_\Hscr$}.
  \end{equation}
\noindent We also say that the lengths $\ell'$ are obtained by a \emph{good
reduction} using $(\Cscr,\lambda)$.
\medskip

An advantage from such a reduction is clear: once we succeeded to find a
reducible $(\Cscr,\lambda)$, it remains to solve PMP with $(G,\ell',\Hscr)$.
Indeed, $\ell'$ is cyclically even (since any cycle and cut have an even number
of edges in common), and taking an integer solution to PMP with
$(G,\ell',\Hscr)$ and adding to it the weighted cut metrics associated with
$(\Cscr,\lambda)$, we obtain an integer solution to the original problem
(since~\refeq{P1}--\refeq{P2} for $\ell$ are provided by the nonnegativity
$\ell'$ and relation~\refeq{red_dist}). Also $\ell\mapsto \ell'$ does not
decrease the set of $\Hscr$-geodesics. In particular, the following property
(which will be used to show strongly polynomial complexity of the algorithm)
holds:
\begin{numitem1} \label{eq:zero-zero}
if $\eps(x|st)$ for $st\in\Pi_\Hscr$ or $\Delta(xy)$ (defined
in~\refeq{excess}) is zero before a good reduction, then it remains zero after
the reduction.
  \end{numitem1}

Clearly any subcollection of weighted cuts in a reducible $(\Cscr,\lambda)$ is
reducible as well. Also if a cut $\delta X$ (with unit weight) is reducible and
if the subgraph $\langle X\rangle=\langle X\rangle_G$ of $G$ induced by
$X\subset V$ consists of $k$ components $\langle X_1\rangle,\ldots, \langle
X_k\rangle$, then each cut $\delta X_i$ is reducible as well (in view of
$\chi^{\delta X} = \chi^{\delta X_1}+\cdots+\chi^{\delta X_k}$ and $\rho
X\subseteq \rho X_1\cup \ldots \cup \rho X_k$), and similarly for the
components of $\langle V-X\rangle$. So we always may deal with only those cuts
$\delta X$ for which both subgraphs $\langle X\rangle$ and $\langle V-X\rangle$
are connected, called \emph{simple} cuts. The planarity of $G$ implies that
 \begin{numitem1} \label{eq:simple_cut}
for a simple cut $\delta X$ and any face $F$,  $|\delta X\cap E_F|\in\{0,2\}$.
 \end{numitem1}

The main part of the proof of Theorem~\ref{tm:H3alg} will consist in showing
(throughout Sects.~\ref{sec:red_cuts}--\ref{sec:final}) the following
assertion.
  \begin{prop} \label{pr:without_faces}
When $|\Hscr|=3$, one can find, in strongly polynomial time, a reducible
collection of cuts with integer weights so that the reduction of
$(G,\ell,\Hscr)$ by these cuts results in a triple $(G',\ell',\Hscr')$ where
$|\Hscr'|\le 3$ and $G'$ has no inner faces: $\Fscr_{G'}=\Hscr'$.
  \end{prop}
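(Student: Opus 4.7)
The plan is to establish the proposition by presenting an iterative procedure that repeatedly applies three different kinds of good reductions by cuts: Reduction~I (to be constructed in the present section) and two further kinds, Reductions~II and III, to be developed in subsequent sections. Each reduction produces a reducible collection of simple cuts with integer weights, after which the preprocessing operations of Section~\ref{sec:simpl} are reapplied to restore (C1)--(C3). When no reduction is any longer applicable, the target condition $\Fscr_{G'}=\Hscr'$ will hold.

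For Reduction~I I would invoke the strongly polynomial two-hole algorithm of~\cite{kar90} as a black box. Fix a pair of holes, say $H_1,H_2\in\Hscr$. Applying~\cite{kar90} to an auxiliary two-hole instance derived from $(G,\ell)$ in which the third hole $H_3$ has been ``sealed off'' (for example, by restricting attention to simple cuts $\delta X$ with $V_{H_3}\subseteq X$, or by contracting $\bd(H_3)$ to a single vertex) yields an integer packing of simple cuts that realizes distances on $V_{H_1}\cup V_{H_2}$. By~\refeq{simple_cut}, each such cut traces a simple Jordan arc in the plane with endpoints on $\bd(H_1)\cup\bd(H_2)$ that avoids $H_3$, so peeling it off shortens lengths along that arc and eventually, via~(OP1), causes contractions that eliminate inner faces lying in the region swept out. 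Running this for each of the three unordered pairs of holes removes all such ``two-sided'' inner faces; what remains must separate the three holes in a more essential way.

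After Reduction~I is exhausted, I would apply Reduction~II to handle configurations in which the residual inner faces form a ``necklace'' encircling one of the holes, and Reduction~III to deal with remaining ``edge-free'' configurations in which faces interact with all three holes simultaneously. Each of these should again be implemented by invoking~\cite{kar90} on a suitably constructed auxiliary instance, and each preserves (C1)--(C3) after re-running preprocessing. The main obstacle---and, I expect, the crux of the whole argument---is proving that Reductions~I, II, III are \emph{exhaustive}: if none of them is applicable, then no inner face exists. This should require a careful planar-topological case analysis of how an inner face can separate three holes, exploiting the tightness of every face-pair guaranteed by~(C2) and the absence of dominating edges guaranteed by~(C3) to rule out would-be counterexamples.

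For the complexity bound, each invocation of~\cite{kar90} runs in strongly polynomial time, and by~\refeq{zero-zero} good reductions only enlarge the set of tight pairs. Combined with the fact that each iteration strictly decreases some nonnegative integer potential (a suitable combination of the number of inner faces and the number of non-tight pairs) and that the preprocessing operations are also strongly polynomial, the overall procedure terminates in strongly polynomial time.
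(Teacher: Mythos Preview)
Your overall architecture---iterate three kinds of good reductions interleaved with preprocessing, and argue termination via a monotone potential tied to~\refeq{zero-zero}---matches the paper's. But your concrete description of Reduction~I is not the paper's, and as stated it has a real gap.

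The paper's Reduction~I does \emph{not} pick a pair of holes and seal off the third. Instead it fixes a single hole $H$ and an excessive pair $(P,L)$ of type $\le 1$ (so $P$ is an $H$-geodesic and $L\subset\bd(H)$ is too long), runs the two-hole algorithm of~\cite{kar90} on the subgraph $G_{P,L}$ lying in the region $\Omega(P,L)$ bounded by $P\cup L$, and extracts only those cuts that meet $L$ twice and $P$ not at all. The crucial point is that $P$ is a geodesic separating $\Omega(P,L)$ from the rest of $G$: this is exactly what makes the extracted cuts reducible for the \emph{full} three-hole problem (Lemma~\ref{lm:Cpreduc}). Your proposal---contract $\bd(H_3)$ or restrict to cuts with $V_{H_3}\subseteq X$---has no such geodesic barrier, so there is no reason the resulting cuts should satisfy~\refeq{red_dist} for pairs in $\Pi_{H_3}$. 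A cut that is tight for the $\{H_1,H_2\}$ packing may well cross some $H_3$-geodesic more times than it separates its endpoints, and then subtracting it destroys the $H_3$-distances. You would need a separate argument, and none is offered.

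The same issue recurs in your sketches of Reductions~II and~III: each time the paper invokes~\cite{kar90} on an auxiliary one- or two-hole instance, it has first arranged that the cuts it will extract live in a region walled off by geodesics (or paths that are about to become geodesic), and then proves reducibility for $(G,\ell,\Hscr)$ by a path-rerouting argument (Lemmas~\ref{lm:Cscr1}, \ref{lm:Cases2a2b}, \ref{lm:C1lambda1}). Identifying those regions and proving those lemmas is the real content; ``necklace'' and ``edge-free'' are accurate names for Sections~\ref{sec:neclac} and~\ref{sec:edge_free}, but the substance---the case analysis in Subcases~1a--2d, Lemma~\ref{lm:Gammap} on the existence of a suitable vertex triple $x,y,z$, and the final triangle-to-star step in Section~\ref{sec:final}---is absent from your outline.
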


As a step toward proving this assertion, in the rest of this section we
eliminate one sort of reducible cuts. Unless otherwise is explicitly said, when
speaking of a shortest path (or a geodesic), we mean that it is shortest w.r.t.
the current length $\ell$, or $\ell$-shortest.

Fix a hole $H$ and consider an $H$-geodesic $P$ with ends $s,t\in V_H$.  Let
$\Lscr_H(st)$ denote the pair of $s$--$t$ paths that form the boundary of $H$.
  \medskip

\noindent\textbf{Definitions.} ~For $P$ as above and $\Lscr_H(st)=\{L,L'\}$,
define $\Omega(P,L)$ to be the closed region of the sphere bounded by $P$ and
$L$ and not containing the hole $H$, and define $G_{P,L}= (V_{P,L},E_{P,L})$ to
be the subgraph of $G$ lying in $\Omega(P,L)$. We say that the pair $(P,L)$
(and the region $\Omega(P,L)$) is \emph{of type} $i$ and denote $\tau(P,L):=i$
if $\Omega(P,L)$ contains exactly $i$ holes (then $0\le \tau(P,L)+\tau(P,L')=
|\Hscr|-1$). The pair $(P,L)$ is called \emph{normal} if $L$ is shortest, and
\emph{excessive} otherwise (when $\ell(L)>\dist(st)=\ell(P)$). Also we write
$\tau(P):=\min\{\tau(P,L),\tau(P,L')\}$ and define $\tau(H)$ to be the maximum
$\tau(P)$ over all $H$-geodesics $P$, referring to $\tau(P)$ and $\tau(H)$ as
the \emph{type} of $P$ and $H$, respectively.
 \medskip

In particular, if $|\Hscr|\le 4$ then $\tau(P),\tau(H)\le 1$. In the picture
below, $\Hscr=\{H,H',H''\}$, $\tau(P,L)=0$ and $\tau(P',L')=1$.

\vspace{-0cm}
\begin{center}
\includegraphics[scale=0.9]{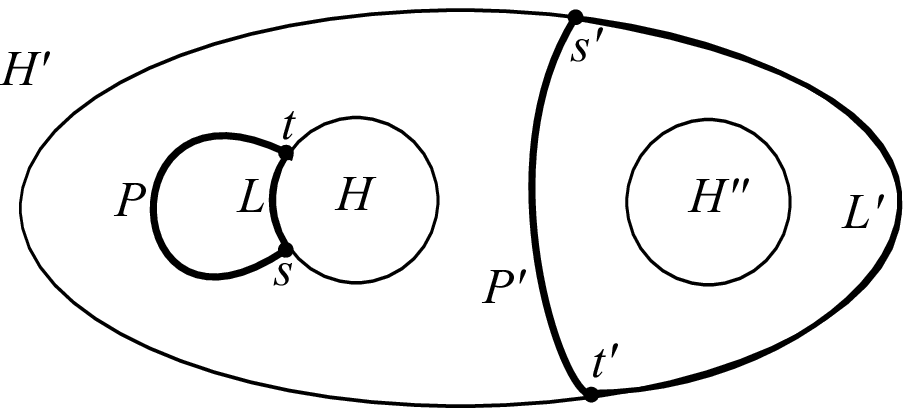}
\end{center}
\vspace{-0cm}

\textbf{An algorithm of eliminating excessive pairs of types 0,1
(Reduction~I).} ~Suppose that a pair $(P,L)$ as above (concerning $H,s,t$) has
type $i\le 1$ and is excessive. For convenience, assume that the region
$\Omega(P,L)$ is bounded. Let $\alpha:=(\ell(L)-d(st))/2$; then $\alpha$ is an
integer $\ge 1$. We wish to make a good reduction by cuts so as to turn $L$
into an $H$-geodesic while preserving the distance $d(st)$.

To this aim, we consider the auxiliary PMP with $(G_{P,L}, \ell_{P,L},
\Hscr')$, where $\ell_{P,L}$ is the restriction of $\ell$ to the edge set
$E_{P,L}$, and $\Hscr'$ consists of the $i$ holes of $\Hscr$ located in
$\Omega(P,L)$ plus the outer face $\bar H$ of $G_{P,L}$ (with the boundary
$P\cup L$).

Since $|\Hscr'|=i+1\le 2$, we can apply the strongly polynomial algorithm
of~\cite{kar90} to find a packing of (simple) cuts $\delta X$ with integer
weights $\lambda(X)>0$ realizing the distances on $\Pi_{\Hscr'}$. From this
packing we extract the set $\Cscr$ of those cuts that meet $\bd(\bar H)$
(twice). Since $P$ is shortest, each cut in $\Cscr$ meets $P$ at most once.
Then $\Cscr$ is partitioned into the set $\Cscr'$ of cuts $\delta X$ with
$|\delta X\cap L|=2$ (and $\delta X\cap P=\emptyset$) and the rest (formed by
the cuts $\delta X$ with $|\delta X\cap L|=|\delta X\cap P|=1$).

One can see that $\sum(\lambda (X)\colon \delta X\in \Cscr')=\alpha$. We assert
that $(\Cscr',\lambda')$ is reducible in the whole $(G,\ell)$, where
$\lambda':=\lambda\rest{\Cscr'}$.

To show this, assume that $s,t\notin X$ for each $\delta X\in\Cscr'$ (since
$|\delta X\cap L|=2$ and the generating set $X$ of this cut can be taken up to
the complement to $V_{P,L}$). Then $X\cap P=\emptyset$, and $X$ generates the
same cut in both $G_{P,L}$ and $G$.

Define $\ell':=\ell(e)-\sum(\lambda(X)\chi^{\delta X} \colon \delta
X\in\Cscr')$. Let $\tilde d$ be the distance within the subgraph $G_{P,L}$ with
the lengths $\ell'(e)$ of edges $e\in E_{P,L}$. The fact that the path $P$
separating $G_{P,L}$ from the rest of $G$ is $\ell$-shortest easily implies
that any $\ell$-shortest path in $G_{P,L}$ is $\ell$-shortest in the whole $G$.
Also $(\Cscr',\lambda')$ is reducible in $(G_{P,L},\ell_{P,L},\Hscr')$, the
cuts in $\Cscr'$ do not meet $P$, and $\sum(\lambda (X)\colon \delta X\in
\Cscr')=\alpha$. These properties imply:
\begin{numitem1} \label{eq:PLshort}
in $G_{P,L}$, both $P,L$ are $\ell'$-shortest, and $\ell'(L)=\tilde
d(st)=d(st)=\ell(P)$; and
 \end{numitem1}
 \begin{numitem1} \label{eq:pqdist}
any $pq\in \Pi_{\Hscr'}$ satisfies
 $\tilde d(pq)=d(pq)-\sum(\lambda(X)\colon \delta\in\Cscr',\,
pq\in\rho_{\Hscr'}(X))$.
  \end{numitem1}

\begin{lemma} \label{lm:Cpreduc}
$(\Cscr',\lambda')$ is reducible for $(G,\ell,\Hscr)$.
  \end{lemma}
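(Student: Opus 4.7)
The plan is to verify the two conditions defining reducibility. Nonnegativity of $\ell'$ is immediate: for $e\in E_{P,L}$ it is granted by the reducibility of $(\Cscr,\lambda)$ for $(G_{P,L},\ell_{P,L},\Hscr')$ (any subcollection of a reducible packing is reducible), while for $e\notin E_{P,L}$ one has $\ell'(e)=\ell(e)\ge 0$, since the support of every cut in $\Cscr'$ lies inside $E_{P,L}$. Recall that by the setup preceding the lemma, each generating set $X$ of a cut in $\Cscr'$ can be taken with $s,t\notin X$ and $\delta X\cap P=\emptyset$; then the connectedness of $P$ together with $s,t\notin X$ forces $V_P\cap X=\emptyset$, so in fact $X\subseteq V_{P,L}\setminus V_P$.

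The core of the argument is the distance equation~\refeq{red_dist}, and the main tool I would invoke is a \emph{bypass principle}: for any $u,v\in V_P$ and any $uv$-path $R$ in $G$, one has $\ell'(R)\ge \ell'(P[u,v])$. Indeed, $P[u,v]$ is $\ell$-shortest in $G$ as a subpath of a shortest path, and is $\ell'$-shortest inside $G_{P,L}$ by~\refeq{PLshort}, while outside $G_{P,L}$ the functions $\ell$ and $\ell'$ agree on edges; hence any excursion of $R$ between two $V_P$-vertices, either into or out of $G_{P,L}$, can be replaced by the corresponding arc of $P$ without raising the $\ell'$-length. The consequence is that an $\ell'$-shortest terminal path may be chosen to stay on one side of $P$ when its endpoints lie on the same side of $P$, and to cross $V_P$ in a single transition otherwise.

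With the bypass principle in hand, I would split the analysis of a terminal pair $pq\in\Pi_\Hscr$ according to the position of $p,q$ relative to $V_{P,L}$. When both $p,q$ lie outside $V_{P,L}$, no cut of $\Cscr'$ separates them, and applying the bypass principle to both $\ell$ and $\ell'$ yields $d'(pq)=d(pq)$. When both $p,q$ lie in $V_{P,L}$ (so $pq$ belongs either to $\Pi_{\bar H}$, with endpoints on $V_L\cup V_P$, or to $\Pi_{H^*}$ for the unique inner hole $H^*\subset\Omega(P,L)$ in case $\tau(P,L)=1$), the bypass principle gives $d'(pq)=\tilde d(pq)$, whereupon~\refeq{pqdist} and the observation that $\rho_\Hscr X$ and $\rho_{\Hscr'}X$ agree on such pairs (since $X\cap V_P=\emptyset$) deliver the desired equality.

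The principal obstacle is the remaining case $p\in V_L\setminus\{s,t\}$ and $q\in V_{L'}\setminus\{s,t\}$, where the pair straddles $V_P$. Here I would decompose
\[
 d'(pq)=\min_{u\in V_P}\bigl(\tilde d(pu)+d^{\mathrm{out}}(uq)\bigr),\qquad
 d(pq)=\min_{u\in V_P}\bigl(d(pu)+d^{\mathrm{out}}(uq)\bigr),
\]
where $d^{\mathrm{out}}$ denotes the $\ell$-distance (equivalently, $\ell'$-distance) in the closure of the region exterior to $G_{P,L}$. Since $X\cap V_P=\emptyset$ for every $\delta X\in\Cscr'$, the correction $\tilde d(pu)-d(pu)=-\sum(\lambda(X)\colon p\in X,\,\delta X\in\Cscr')$ supplied by~\refeq{pqdist} is independent of $u\in V_P$; it therefore factors out of the minimum and yields $d'(pq)=d(pq)-\sum(\lambda(X)\colon pq\in\rho X,\,\delta X\in\Cscr')$, completing the verification.
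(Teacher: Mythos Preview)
Your proof is correct and follows essentially the same approach as the paper's: your ``bypass principle'' is precisely the paper's inductive reduction on the number of components of $Q\cap P$, and your min-decomposition $d'(pq)=\min_{u\in V_P}\bigl(\tilde d(pu)+d^{\mathrm{out}}(uq)\bigr)$ in the mixed case is a clean reformulation of the paper's choice of a single vertex $v\in V_P\cap V_Q$ together with~\refeq{pqdist}. The paper additionally remarks that planarity forces the mixed case to occur only for $pq\in\Pi_H$, which is exactly your identification of the mixed case with $p\in V_L\setminus\{s,t\}$, $q\in V_{L'}\setminus\{s,t\}$.
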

 \begin{proof}
Consider $pq\in \Pi_\Hscr$ and a (simple) $p$--$q$ path $Q$ in $G$. It suffices
to show that
  \begin{equation} \label{eq:ellpQ}
 \ell'(Q)\ge d(pq)-\sum(\lambda(X):\delta X\in\Cscr',\, pq\in \rho_\Hscr X).
 \end{equation}

Let $p,q$ belong to $H'\in\Hscr$ and let the subgraph $Q\cap P$ consist of $k$
components $Y_1,\ldots,Y_k$, occurring in this order in $Q$. We use induction
on $k$.

Suppose that $k\ge 2$. Take vertices $u\in Y_1$ and $v\in Y_2$, and let
$Q':=Q[u,v]$ and $P':=P[u,v]$. Then $\ell'(Q')\ge\ell'(P')$
(by~\refeq{PLshort}). Therefore, replacing in $Q$ the piece $Q'$ by $P'$, we
obtain a $p$--$q$ path $Q''$ in $G$ with $\ell'(Q'')\le \ell'(Q)$ and such that
the number of components of $Q''\cap P$ is less than $k$, and then we apply
induction.

It remains to consider the cases when either (a) $Q\cap P=\emptyset$, or (b)
$Q\cap P$ is nonempty and connected. In case~(a), $Q$ is entirely contained in
one of the subgraphs $G_{P,L}$ and $G':=(G-G_{P,L})\cup P$. If $Q\subset
G_{P,L}$, then~\refeq{ellpQ} follows from~\refeq{pqdist}. And if $Q\subset G'$,
then~\refeq{ellpQ} follows from $\ell'(Q)=\ell(Q)\ge d(pq)$.

In case~(b), if $Q$ is entirely contained in $G_{P,L}$ or in $G'$, then we
argue as in case~(a). So we may assume that $p$ is in $G_{P,L}-P$, ~$q$ is in
$G'-P$. Take a vertex $v$ in $P\cap Q$, and let $Q_1:=Q[p,v]$ and $Q_2:=Q[v,q]$
(which lie in $G_{P,L}$ and $G'$), respectively. By planarity reasons, there is
a unique hole containing both $p,q$, namely, $H$ (where $p$ occurs in $L$, and
$q$ in the other path in $\Lscr_H(st)$). Then $\ell'(Q_1)\ge\tilde d(pv)$
(by~\refeq{pqdist}) and $\ell'(Q_2)=\ell(Q_2)\ge d(vq)$. Also for any $\delta
X\in\Cscr'$, neither $q$ nor $v$ is in $X$. This implies that $pq\in\rho_\Hscr
X$ if and only if $pv\in \rho_{\Hscr'}X$. Hence $a:=\sum(\lambda(X)\colon
\delta X\in \Cscr',\, pq\in\rho_\Hscr X)$ is equal to $b:=\sum(\lambda(X)\colon
\delta X\in \Cscr',\, pv\in\rho_{\Hscr'}X)$, and we have
  \begin{multline*}
\ell'(Q)=\ell'(Q_1)+\ell(Q_2)\ge \tilde d(pv)+d(vq) \\
  =d(pv)-b+d(vq)=d(pv)+d(vq)-a\ge d(pq)-a,
  \end{multline*}
implying~\refeq{ellpQ}.
 \end{proof}

Applying the above procedure, step by step, to the excessive pairs of types 0
and 1, we get rid of all such pairs. Each pair in $\Pi_\Hscr$ is treated at
most once (in view of~\refeq{zero-zero}), and therefore the whole process,
called \emph{Reduction~I}, takes $O(|V|^2)$ iterations and is implemented in
strongly polynomial time (relying on the complexity of the algorithm
in~\cite{kar90}).

So we may further assume that
\begin{description}
\item{(C4):} ~$(G,\ell,\Hscr)$ has no excessive pairs $(P,L)$ with $\tau(P,L)\le
1$; therefore, when $|\Hscr|\le 4$, for any $H\in\Hscr$ and $s,t\in V_H$, at
least one of the two $s$--$t$ paths $L,L'$ in $\bd(H)$ is shortest:
$d(st)=\min\{\ell(L),\ell(L')\}$
  \end{description}
(since at least one of the pairs $(P,L)$ and $(P,L')$ has type $\le 1$). One
more useful observation for $|\Hscr|=3$ is:
\begin{numitem1} \label{eq:diametr}
if $\Hscr=\{H,H',H''\}$ and $P$ is an $H$-geodesic with ends $s,t$ separating
the holes $H'$ and $H''$, then both paths in $\Lscr_H(st)$ are shortest.
  \end{numitem1}

%-------------------------- SEC. 4

\section{Elimination of lenses without holes} \label{sec:0-lenses}

Consider distinct holes $H,H'\in\Hscr$, an $H$-geodesic $P$, and an
$H'$-geodesic $P'$. Suppose that $P,P'$ have common vertices $x,y$ and let $Q$
(resp. $Q'$) be the subpath of $P$ (resp. $P'$) between $x$ and $y$. Suppose
that the interiors of $Q$ and $Q'$ are disjoint. Then the fact that both $P,P'$
are shortest implies $P\cap Q'=P'\cap Q=\{x,y\}$.
 \medskip

\noindent\textbf{Definition.} ~We denote the (closed) region of the sphere
bounded by $Q\cup Q'$ and containing neither $H$ nor $H'$ by $\Omega(Q,Q')$ and
call it a \emph{lens} for $P,P'$ with ends $x,y$. If, in addition,
$\Omega(Q,Q')$ contains no hole, it is called a \emph{0-lens}. (See the
picture.)

\vspace{-0cm}
\begin{center}
\includegraphics[scale=0.9]{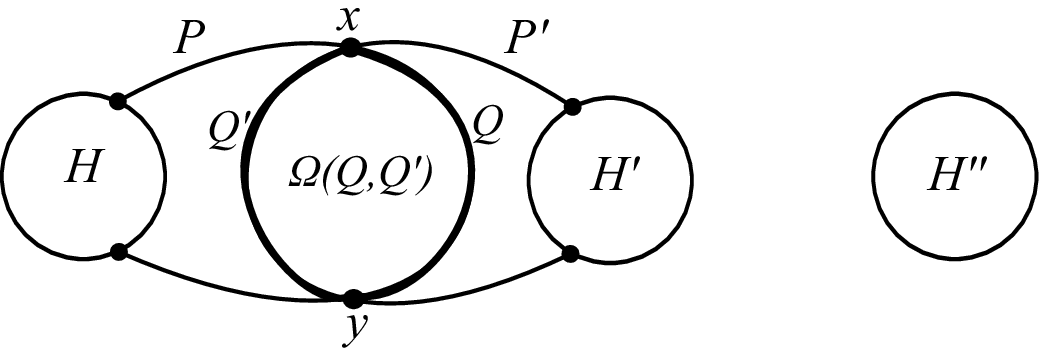}
\end{center}
\vspace{-0.3cm}

In what follows, for a vertex $v$ and an edge $e$ of a path $P$, we may
liberally write $v\in P$ for $v\in V_P$ and $e\in P$ for $e\in E_P$.

It turns out that conditions (C2)--(C3) provide the following nice property.
\begin{prop} \label{pr:0-lens}
There exists no 0-lens at all.
  \end{prop}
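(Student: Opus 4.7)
My plan is to prove Proposition~\ref{pr:0-lens} by contradiction. I will assume that a 0-lens $\Omega=\Omega(Q,Q')$ exists, choose one that is minimal under the ordering (number of inner faces of $G$ enclosed, then $\ell(Q)+\ell(Q')$), and derive a contradiction from conditions (C1)--(C4). The argument will rest primarily on (C2) and (C3): (C3) forbids dominating edges in any face, while (C2) says that any two vertices in a common face form a tight pair and hence lie on a common $\Hscr$-geodesic. First, I record the routine facts that $\ell(Q)=\ell(Q')=d(xy)$ and that every subpath $Q[a,b]$ satisfies $\ell(Q[a,b])=d(a,b)$ (subpaths of shortest paths are shortest), so any $x$--$y$ path contained in $\Omega$ has length at least $d(xy)$.

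The main step is a single-face reduction: I plan to show that a minimal 0-lens must coincide with a single inner face $F$ of $G$ with $\bd(F)=Q\cup Q'$. If not, then either some inner face $F_0\subseteq\Omega$ meets $\bd(\Omega)$ along a subpath $Q[a,b]$ of $Q$ (with the rest of $\bd(F_0)$ being a path $R$ inside $\Omega$), or there is a chord $e$ joining a vertex of $Q$ with a vertex of $Q'$. In the first situation I apply (C2) to $a,b\in V_{F_0}$ to obtain an $\Hscr$-geodesic running along $R$ (up to splicing); replacing $Q[a,b]$ by this geodesic produces a new shortest $x$--$y$ path which, together with $Q'$, bounds a strictly smaller 0-lens, contradicting minimality. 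The symmetric case for $Q'$ and the chord case (where the chord splits $\Omega$ into two sub-lenses, at least one of which is again a 0-lens after an analogous splicing) are treated the same way.

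Once $\Omega=F$ is a single face, the contradiction is quick. By (C1), $|V_F|\ge 3$, so at least one of $Q,Q'$ has $\ge 2$ edges. If one of them, say $Q$, is a single edge $e=(x,y)$, then $\bd(F)=\{e\}\cup Q'$ and $\ell(Q')=d(xy)=\ell(e)$, so $e$ is a dominating edge of $F$, directly contradicting (C3). Otherwise both $Q$ and $Q'$ have at least two edges; take the neighbors $u\in V_Q$ and $u'\in V_{Q'}$ of $x$ on $\bd(F)$ and apply (C2) to the tight pair $(u,u')$ to obtain an $\Hscr$-geodesic $S$ through $u$ and $u'$. Since $F$ has no interior vertices or edges, $S[u,u']$ either runs along $\bd(F)$ through $x$ or $y$, or exits $\Omega$ entirely; in the first subcase, splicing $S$ with suitable pieces of $P$ or $P'$ yields an excessive pair of type $\le 1$, contradicting (C4); in the second, the entry/exit points of $S$ on $\bd(\Omega)$ cut off a strictly smaller 0-lens, again contradicting minimality.

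The main obstacle I anticipate is the single-face reduction in the second paragraph: the splicing step demands careful topological bookkeeping to guarantee that the new piece is indeed a subpath of an $\Hscr$-geodesic, that the resulting region is still hole-free, and that the minimality parameter strictly drops after replacement. The multi-edge single-face subcase is also subtle and leans essentially on condition (C4) produced by Reduction~I in Section~\ref{sec:red_cuts}.
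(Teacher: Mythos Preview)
Your minimality plan has a real gap at the single-face reduction. Condition~(C2) hands you an $\Hscr$-geodesic $S$ through $a$ and $b$, but $S[a,b]$ is just \emph{some} shortest $a$--$b$ path; nothing prevents it from coinciding with $Q[a,b]$ itself or from leaving $\Omega$ altogether, and in neither case can you splice to cut off $F_0$ and shrink the lens. Concretely, when $F_0$ shares a single edge $e=ab$ with $Q$, condition~(C3) forces $\ell(R)>\ell(e)=d(ab)$, so $R$ is \emph{not} a shortest $a$--$b$ path and no rerouting along $R$ is available. What your reduction actually needs is the statement that every edge (and every face-pair) of $G_{Q,Q'}$ lies on a shortest $x$--$y$ path \emph{inside} $\Omega$; establishing this is the nontrivial core of the argument, not a bookkeeping detail. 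Your endgame is likewise unjustified: in the single-face case with both $Q,Q'$ of length $\ge 2$, I do not see how the geodesic through $u,u'$ ever yields an excessive pair of type $\le 1$, and in fact the paper proves the proposition using only (C2)--(C3), never invoking~(C4).

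The paper proceeds differently. It first proves the Claim just mentioned: given an edge (or a pair of vertices in a face) inside $\Omega$, take an $\Hscr$-geodesic $L$ through it; since $\Omega$ contains no hole, $L$ must re-cross $P$ or $P'$, and an exchange of subpaths between $L$ and $P$ (or $P'$) reroutes $L$ into an $x$--$y$ geodesic lying in $\Omega$ and still containing the given edge (or pair). With the Claim in hand, $G_{Q,Q'}$ is a union of shortest $x$--$y$ paths and can be oriented as an acyclic digraph. Each face $F$ of $G_{Q,Q'}$ in $\Omega$ then has a source $x_F$ and sink $y_F$, and $\bd(F)$ consists of two directed $x_F$--$y_F$ paths $A,B$. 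If both $A$ and $B$ had intermediate vertices $u\in A$, $v\in B$, the Claim would give a shortest $x$--$y$ path through $u$ and $v$, and a planarity argument forces this path to create a directed cycle, contradicting acyclicity. Hence one of $A,B$ is a single edge, which is then dominating in $F$, contradicting~(C3).
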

  \begin{proof}
Suppose, for a contradiction, that a 0-lens $\Omega(Q,Q')$ does exist, and let
$H,H',P,P',x,y, Q,Q'$ be as above. Let $G_{Q,Q'}=(V_{Q,Q'},E_{Q,Q'})$ be the
subgraph of $G$ lying in $\Omega:=\Omega(Q,Q')$. We rely on the following
  \medskip

\noindent\textbf{Claim} ~\emph{In the graph $G_{Q,Q'}$, each edge $e$ is
contained in a shortest $x$--$y$ path, and similarly for any pair of vertices
in a face of $G_{Q,Q'}$ in $\Omega$.}
 \medskip

 \noindent\underline{Proof of Claim.}
~One may assume that $e$ is an inner edge of $G_{Q,Q'}$ (i.e., not on $Q\cup
Q'$). Take an $\Hscr$-geodesic $L$ containing $e$. Using the fact that $\Omega$
has no hole, one can realize that $L$ crosses at least twice some of $P,P'$.
Moreover, there are two vertices $u,v$ of $L$ such that $e$ belongs to
$L':=L[u,v]$ and at least one of the following takes place: both $u,v$ are in
$P$; both $u,v$ are in $P'$. Assume that $u,v$ are chosen so that $L'$ is
minimal under this property and let for definiteness $u,v\in P$. Let
$R:=P[u,v]$.

Let $\tilde L$ be the path obtained from $P$ by replacing its part $R$ by $L'$;
this is again a geodesic containing $e$. Moreover, the minimal choice of $L'$
implies that at least one of the vertices $u$ and $v$, say, $u$, belongs to
$Q$, while the other, $v$, either (a) belongs to $Q$ as well, or (b) is not in
$\Omega$.

In case~(a), we may assume that $L'$ is entirely contained in $\Omega$. Then
replacing in $Q$ the part between $u$ and $v$ by $L'$, we just obtain the
desired $x$--$y$ path containing $e$.

In case~(b), $L'$ meets $Q'$ at a vertex $v'$ such that $L'[u,v']$ lies in
$\Omega$ and contains $e$. Let $s,t$ be the ends of $\tilde L$ (and $P$); we
may assume that $s,u,v',v,t$ occur in this order in $\tilde L$. Since $u\in Q$,
the subpath $\tilde L[s,u]$ ($=P[s,u]$) passes one of the ends $x,y$ of the
lens $\Omega$, say, $x$; see the picture below. Then the desired shortest
$x$--$y$ path in $G_{Q,Q'}$ containing $e$ is obtained by concatenating
$Q[x,u]$, $L'[u,v']$ and $Q'[v',y]$.

\vspace{-0cm}
\begin{center}
\includegraphics{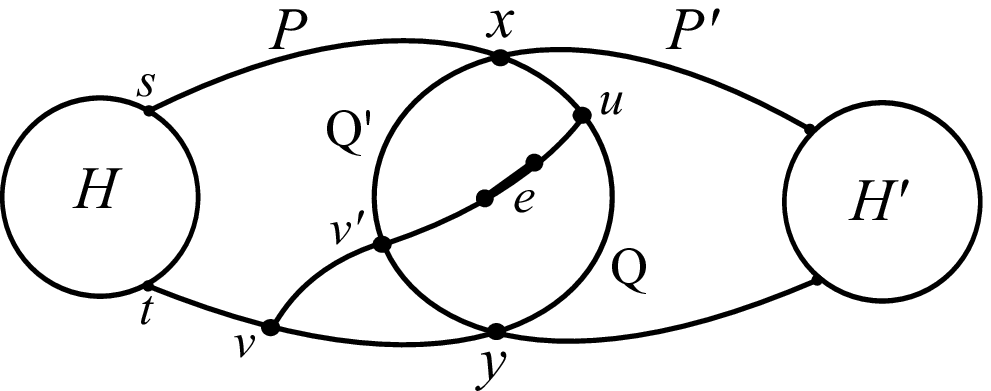}
\end{center}
\vspace{-0cm}

The assertion for a pair of vertices $w,z$ in a face $F$ of $G_{Q,Q'}$ in
$\Omega$ is proved in a similar way. More precisely, take an $\Hscr$-geodesic
$D$ passing $w$ and $z$, existing by~(C2), and let $D':=D[w,z]$. Making, if
needed, appropriate exchange operations involving $P,D'$ and/or $P',D'$, one
can ``improve'' $D'$ so as to get it entirely contained in $\Omega$ (keeping
$w,z$). Now we argue as above, with $D'$ in place of $e$. \hfill\qed
 \medskip

By the Claim, $G_{Q,Q'}$ is the union of shortest $x$--$y$ paths; therefore,
one can direct the edges of $G_{Q,Q'}$ so that each shortest $x$--$y$ path
turns into a directed $x$--$y$ path, and vice versa. Then each face $F$ of
$G_{Q,Q'}$ in $\Omega$ has two vertices $x_F$ and $y_F$ such that $\bd(F)$ is
formed by two $x_F$--$y_F$ paths $A$ and $B$, which are extended to shortest
$x$--$y$ paths $C:=P'_F\cdot A\cdot P''_F$ and $D:=P'_F\cdot B\cdot P''_F$
(where $P'_F,P''_F$ are shortest $x$--$x_F$ and $y_F$--$y$ paths in $G_{Q,Q'}$,
respectively).

Suppose that $A$ has an intermediate vertex $u$ and $B$ has an intermediate
vertex $v$. By the Claim, $u$ and $v$ belong to a shortest $x$--$y$ path $L$ in
$G_{Q,Q'}$; let for definiteness $x,u,v,y$ occur in this order in $L$. By the
planarity, $L':=L[u,v]$ must intersect either (a) the path $P'_F$, or (b) the
path $P''_F$. The graph $G_{Q,Q'}$, being directed as indicated above, is
acyclic. But in case~(a), the subgraph $C\cup L'$ has a directed cycle, and in
case~(b), so does the subgraph $D\cup L'$; a contradiction.

Thus, either $A$ or $B$ has no intermediate vertex, i.e., has only one edge
$e$. Since $\ell(A)=\ell(B)$, $e$ is dominating in $F$. This contradicts~(C3),
and the result follows.
 \end{proof}

One consequence of the non-existence of 0-lenses that will be used later is as
follows.

 \begin{numitem1} \label{eq:barrier}
Let $(P_i,L_i),\ldots,(P_k,L_k)$ be normal pairs of type 0 for a hole $H$, and
let $\Omega_i:=\Omega(P_i,L_i)\cup H$. Then for each hole $H'\ne H$, no
$H'$-geodesic has a vertex in $\inter(\Omega_1\cup\cdots\cup\Omega_k)$.
  \end{numitem1}

Indeed, such an $H'$-geodesic would create a 0-lens with some $P_i$.

%-------------------------- SEC. 5

\section{Necklaces} \label{sec:neclac}

In this section we further simplify the graph $(G,\ell)$ by handling one more
sort of reducible cuts. In fact, our description in the previous sections was
applicable to an arbitrary number of holes. This and the next sections will be
devoted to the three-hole case only (though some ingredients are valid for
$|\Hscr|>3$ as well).

Fix a hole $H$ and denote the $\ell$-length of $\bd(H)$ by $\sigma=\sigma_H$.
To simplify our description technically, we insert (for a while) extra
terminals in the boundary $\bd(H)$ to make it central symmetric. More
precisely, for each $s\in V_H$, when $s$ does not have the antipodal terminal
in $\bd(H)$, we add such a vertex by splitting the corresponding edge $pq\in
E_H$ into two edges $pt$ and $tq$ whose lengths are such that
$\ell(pt)+\ell(tq)=\ell(pq)$ and $d(st)=d(sp)+\ell(pt)=d(sq)+\ell(qt)=\sigma/2$
(using $d(sp)+\ell(pq)+d(qs)=\sigma$, cf.~(C4)). This modification does not
affect the problem, and we keep the previous notation $(G,\ell,\Hscr)$. (It
increases the number of vertices and, possibly, creates non-tight pair of
vertices involving $t$, violating~(C3); but this will not be important for us.)

Let $s_1,s_2,\ldots,s_{2n}=s_0$ be the sequence of vertices of the (modified)
boundary cycle $\bd(H)$ in the clockwise order around $\inter(H)$. For each
$s_i$, its antipodal vertex $s_{i+n}$ is also denoted as $t_i$ (hereinafter the
indices are taken modulo $2n$). A path of the form $s_is_{i+1} \cdots s_j$ is
denoted by $L(s_is_j)$; then $L(s_js_i)$ is the path ``complementary'' to
$L(s_is_j)$ in $\bd(H)$. When vertices $s_{i(1)},s_{i(2)},\ldots,s_{i(k)}$
(admitting $i(j)=i(j+1)$) follow in this order cyclically, making at most one
turn, we write $s_{i(1)}\to s_{i(2)}\to \cdots \to s_{i(k)}$.

When $L:=L(st)$ is shortest, we denote by $\Pscr(st)$ the set of shortest
$s$--$t$ paths $P$ such that $\tau(P,L)=0$. For $P,P'\in\Pscr(st)$, let
$P\wedge P'$ (resp. $P\vee P'$) be the $s$--$t$ path which together with $L$
forms the boundary of $\Omega(P,L)\cap \,\Omega(P',L)$ (resp. $\Omega(P,L)\cup
\,\Omega(P',L)$). Then $\ell(P\wedge P') +\ell(P\vee P')=\ell(P)+\ell(P')$,
implying that both $P\wedge P',\, P\vee P'$ are shortest as well. Hence
$(\Pscr(st),\wedge,\vee)$ is a lattice with the minimal element $L$. The
maximal (most remote from $L$) element of $\Pscr(st)$ is denoted by $D(st)$;
then $\Omega(P,L)\subseteq\Omega(D(st),L)$ for any $P\in \Pscr(st)$. (Note that
$D(st)$ can be extracted from the subgraph of shortest $s$--$t$ paths in
strongly polynomial time.)

We are going to examine an interrelation of paths for two antipodal pairs
$s_i,t_i$ and $s_j,t_j$ with $s_i\to s_j\to t_i\to t_j$. For brevity we write
$L_i,\Pscr_i, D_i,\Omega_i$ for $L(s_it_i),\, \Pscr(s_it_i),\, D(s_it_i),\,
\Omega(D(s_it_i),L(s_it_i))$, respectively, and similarly for $j$. Also we
abbreviate $s:=s_i$, $t:=t_i$, $s':=s_j$, and $t':=t_j$.

Consider paths $M\in\Pscr_i$ and $N\in \Pscr_j$. Then $M\cap N\ne\emptyset$;
let $x$ and $y$ be the first and last vertices of $M$ occurring in $M\cap N$.
Since $M,N$ are shortest, $\ell(M[x,y])=\ell(N[x,y])$. Two cases are possible:

\begin{description}
\item[\rm\emph{Case 1}]: ~either $x=y$ or $x$ precedes $y$ in $N$;
\item[\rm\emph{Case 2}]: $y$ precedes $x$ in $N$.
 \end{description}

We first consider Case 1. Represent $M$ as $M_1\cdot M_2\cdot M_3$, and $N$ as
$N_1\cdot N_2\cdot N_3$, where $M_2:=M[x,y]$ and $N_2:=N[x,y]$. One can see
that exchanging the parts $M_2$ and $N_2$ in $M$ and $N$, we again obtain paths
of type 0, one belonging to $\Pscr_i$, and the other to $\Pscr_j$. To slightly
simplify our considerations, we will assume that $M_2=N_2$, denoting this path
as $\hat M$. (Note that if $M=D_i$ and $N=D_j$, then $M_2=N_2$ follows from the
maximality of $D_i,D_j$.) Form the $s$--$t'$ path $P$ and the $s'$--$t$ path
$P'$ as
   $$
   P:=M_1\cdot\hat M\cdot N_3\quad \mbox{and}\quad P':=N_1\cdot\hat M\cdot M_3,
   $$
and let $\Omega:=\Omega(P,L(st'))$ and $\Omega':=\Omega(P',L(s't))$. Clearly
both regions $\Omega',\Omega$ contain no hole; also $\Omega\subseteq
\Omega_i\cup\Omega_j$ and $\Omega'\subseteq \Omega_i\cap\Omega_j$. See the left
fragment of the picture (where $H$ is the outer face of $G$).

 \vspace{-0cm}
\begin{center}
\includegraphics[scale=1.0]{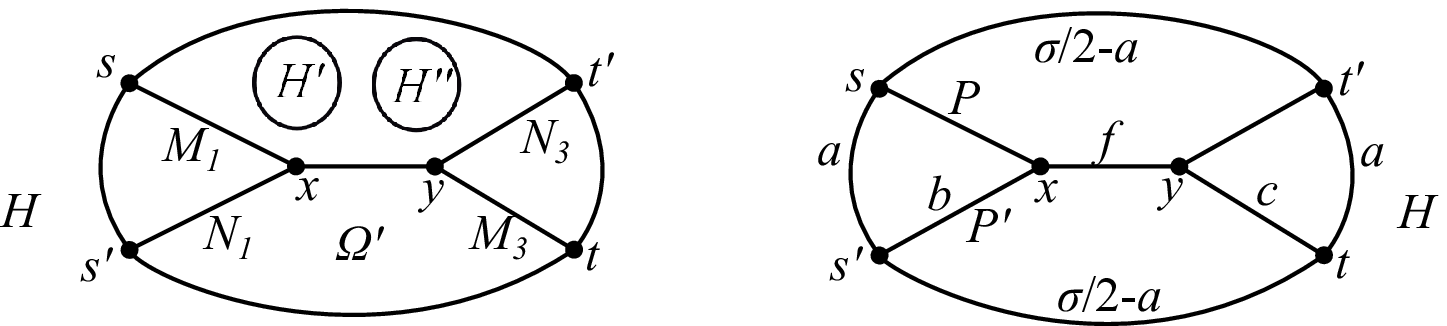}
\end{center}
\vspace{-0.3cm}

Define
  $$
a:=d(ss')\;\; (=d(tt')),\quad b:=d(s'x),\quad c:=d(yt),\quad\mbox{and}\quad
f:=d(xy).
  $$
Since the paths $L_i,L_j,M,N$ are shortest and have the same length $\sigma/2$,
we have
 \begin{equation} \label{eq:dsix}
\ell(P')=b+f+c\ge d(s't)=\sigma/2-a\;\; \mbox{and}\;\;
\ell(P)=\sigma-\ell(P')\le\sigma/2+a.
  \end{equation}

We distinguish between two subcases:

\begin{description}
\item[\rm\emph{Subcase 1a}]: ~the path $P'$ is shortest: $\ell(P')=d(s't)$;
\item[\rm\emph{Subcase 1b}]: ~$\ell(P')>d(s't)$.
\end{description}

If Subcase 1b happens, we devise a certain collection of reducible cuts and
make a good reduction, aiming to obtain a situation as in Subcase~1a. For this
purpose, we apply the algorithm of~\cite{kar90} to solve the auxiliary one-hole
PMP with $(G,\ell,\{H\})$, i.e., we handle the same $G$ and $\ell$ but regard
$H',H''$ as inner faces (see the right fragment of the above picture). It finds
a packing $\Cscr$ of (simple) cuts $\delta X$ with integer weights
$\lambda(X)>0$ realizing the distances on $\Pi_{H}$.

Let $\eps:=\ell(P')-d(s't)$. Then (cf.~\refeq{dsix})
\begin{equation}  \label{eq:ellPp}
\ell(P')=\sigma/2-a+\eps \quad\mbox{and}\quad \ell(P)=\sigma-\ell(P')=\sigma/2
+a-\eps.
  \end{equation}

Since the path $M$ is shortest and connects antipodal terminals, each cut
$\delta X\in\Cscr$ meets $M$ exactly once, and similarly for $N$. Let $\Cscr'$
be the set of cuts $\delta X\in\Cscr$ meeting $L(s't)$. Then each $\delta
X\in\Cscr'$ meets $L(t's)$ as well, whereas each $\delta X\in\Cscr-\Cscr'$
meets one edge in each of $L(ss')$ and $L(tt')$. Partition $\Cscr-\Cscr'$ as
$\Cscr_1\cup\Cscr_2$, where $\Cscr_1$ is formed by the cuts not meeting $P$,
and $\Cscr_2$ is the rest (consisting of the cuts $\delta X$ with $|\delta
X\cap M_1|=|\delta X\cap N_3|=1$). Let $h',h_1,h_2$ be the sums of values
$\lambda(X)$ over the cuts $\delta X$ in $\Cscr',\Cscr_1,\Cscr_2$,
respectively. The cuts in $\Cscr$ must saturate the shortest paths
$L_i,L_j,M,N$, and therefore they saturate $P$ and $P'$. So $L(s't)$ and
$L(t's)$ are saturated by the cuts of $\Cscr'$, ~$P$ by the cuts of
$C'\cup\Cscr_2$, and $L(ss')\cup L(tt')$ by the cuts of $\Cscr_1\cup\Cscr_2$.
Then (in view of~\refeq{ellPp})
  \begin{equation} \label{eq:PPprim}
 h'=\sigma/2-a,\quad h_2=(\ell(P)-h')/2=a-\eps/2\quad\mbox{and}\quad
h_1=a-h_2=\eps/2.
  \end{equation}

For each $\delta X\in\Cscr_1$, since $\delta X$ do not meet $P$, we may assume
that  $X\subset\Omega-P$. Let $\lambda_1$ be the restriction of $\lambda$ to
$\Cscr_1$. We assert the following
  \begin{lemma} \label{lm:Cscr1}
$(\Cscr_1,\lambda_1)$ is reducible for $(G,\ell,\Hscr)$; in other words, for
the reduced length $\ell':=\ell-\sum(\lambda(X)\chi^{\delta X}\colon \delta
X\in\Cscr_1)$, any $p$--$q$ path $Q$ in $G$ with $pq\in\Pi_\Hscr$ satisfies
 \begin{equation} \label{eq:QC1}
\ell'(Q)\ge d(pq)-\sum(\lambda(X)| \{pq\}\cap\rho_\Hscr X| \colon \delta X\in
\Cscr_1).
  \end{equation}
  \end{lemma}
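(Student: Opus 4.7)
I would prove Lemma~\ref{lm:Cscr1} by splitting into cases according to which hole contains the pair $pq$, mirroring the structure of Lemma~\ref{lm:Cpreduc}.

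First consider $pq \in \Pi_H$. Since $(\Cscr,\lambda)$ is feasible for the auxiliary one-hole PMP on $(G,\ell,\{H\})$, every $p$--$q$ path $Q$ obeys the packing inequality $\ell(Q)\ge\sum_{X\in\Cscr}\lambda(X)\,|\delta X\cap Q|$, and the one-hole PMP equality gives $\sum_{X\in\Cscr}\lambda(X)\,[pq\in\rho X]=d(pq)$. Using the parity observation that $|\delta X\cap Q|$ and $[pq\in\rho X]$ have the same parity (so the former is $\ge$ the latter) to bound the $\Cscr\setminus\Cscr_1$ contribution below by $[pq\in\rho X]$ while retaining the full multiplicity on the $\Cscr_1$ part, one obtains
\[
\ell(Q)\;\ge\;\sum_{X\in\Cscr_1}\lambda(X)\,|\delta X\cap Q| + d(pq) - \sum_{X\in\Cscr_1}\lambda(X)\,[pq\in\rho X],
\]
which is exactly~\refeq{QC1} after rearrangement.

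The harder case is $pq\in\Pi_{H'}\cup\Pi_{H''}$. Since $\Omega$ contains no hole and every $X\in\Cscr_1$ satisfies $X\subseteq\Omega\setminus P$, no such $X$ separates $p$ from $q$; so the right side of~\refeq{QC1} reduces to $d(pq)$ and the task is to prove $\ell'(Q)\ge d(pq)$ for every $p$--$q$ path $Q$. My plan is induction on the number $k$ of connected components of $Q\cap P$, following Lemma~\ref{lm:Cpreduc}. For $k\ge 2$, choose vertices $u,v$ in distinct components: since no $\Cscr_1$-cut meets $P$ we have $\ell'(P[u,v])=\ell(P[u,v])$, and replacing $Q[u,v]$ by $P[u,v]$ (using shortness of the relevant subpaths of $P$) drops $k$ without increasing $\ell'$-length. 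In the base case $k\le 1$, the path $Q$ decomposes into at most two parts, one in the subgraph of $G$ lying in $\Omega$ and one in its complement together with $P$; the latter avoids $\Cscr_1$-cuts, so its $\ell'$-length equals its $\ell$-length, and only the former needs a separate estimate.

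The bound on the inside piece is the main obstacle. To handle it I would invoke Proposition~\ref{pr:0-lens} together with the barrier property~\refeq{barrier}: because $(M,L_i)$ and $(N,L_j)$ are normal pairs of type $0$ for $H$ and $\Omega\subseteq\Omega(M,L_i)\cup\Omega(N,L_j)\cup H$, no $H'$- or $H''$-geodesic has a vertex in $\inter(\Omega)$. One then compares $\ell'(Q)$ with $\ell(Q^*)=d(pq)$ for such a geodesic $Q^*$ by applying the packing inequality $\ell\ge\sum_{X\in\Cscr}\lambda(X)\chi^{\delta X}$ on the portion of $Q$ lying inside $\Omega$, where the contribution of $\Cscr\setminus\Cscr_1$ counted with full multiplicity $|\delta X\cap Q|$ absorbs the reduction performed by $\Cscr_1$. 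This step---combining planarity of $\Omega$, the one-hole optimality of $(\Cscr,\lambda)$, and the absence of $0$-lenses from Section~\ref{sec:0-lenses}---is the technical heart of the argument.
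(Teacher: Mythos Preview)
Your treatment of the case $pq\in\Pi_H$ is fine and matches the paper's one-line appeal to the reducibility of $(\Cscr_1,\lambda_1)$ for the one-hole problem $(G,\ell,\{H\})$.

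The gap is in the case $pq\in\Pi_{H'}\cup\Pi_{H''}$. Your induction on the number of components of $Q\cap P$ is modeled on Lemma~\ref{lm:Cpreduc}, but there the path called $P$ was an $H$-geodesic, whereas here $P=M_1\cdot\hat M\cdot N_3$ is \emph{not} a shortest $s$--$t'$ path: by~\refeq{ellPp} one has $\ell(P)=\sigma/2+a-\eps$, while $d(st')=\sigma/2-a$. Consequently a subpath $P[u,v]$ straddling the junction between $M_1$ and $N_3$ need not be $\ell$-shortest (nor $\ell'$-shortest), so the replacement $Q[u,v]\mapsto P[u,v]$ can strictly increase $\ell'(Q)$ and the induction breaks. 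Your final paragraph does not repair this: the barrier property~\refeq{barrier} and the absence of $0$-lenses constrain where $H'$- and $H''$-\emph{geodesics} can go, but say nothing about an arbitrary $p$--$q$ path $Q$, which is what you must bound.

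The paper avoids the problem by never using $P$ as a barrier. Instead it uses $M$ and $N$ themselves, which \emph{are} $H$-geodesics and hence remain $\ell'$-shortest (since $(\Cscr_1,\lambda_1)$ is reducible for $(G,\ell,\{H\})$). One takes a maximal subpath $R$ of $Q$ lying in $\Omega(M,L_i)$ (or $\Omega(N,L_j)$) with endpoints $u,v$ on $M$ (resp.\ $N$), and replaces $R$ by $M[u,v]$ (resp.\ $N[u,v]$); this does not increase $\ell'$-length because $M$ and $N$ are $\ell'$-shortest. Iterating pushes $Q$ out of $\Omega-P$, where $\ell'=\ell$ and the bound is immediate. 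The fix to your argument is thus to induct on excursions of $Q$ into $\Omega(M,L_i)\cup\Omega(N,L_j)$ and reroute along $M$ or $N$, not along $P$.
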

 \begin{proof}
If $Q$ is a $p$--$q$ path with $p,q\in V_H$, then~\refeq{QC1} immediately
follows from the reducibility of $(\Cscr_1,\lambda_1)$ for $(G,\ell,\{H\})$. So
assume that $pq\in\Pi_{H'}\cup \Pi_{H''}$. Then both $p,q$ are not in
$\Omega-P$.

If $Q$ does not meet $\Omega-P$, then we have $\ell'(Q)=\ell(Q)$ (in view of
$X\subset \Omega-P$ for each $\delta X\in\Cscr_1$), and~\refeq{QC1} is trivial.
Suppose that $Q\cap(\Omega-P)\ne\emptyset$. Then $Q$ crosses at least one of
$M,N$. Let for definiteness $Q$ meets $\Omega(M,L_i)-M$, and take a maximal
subpath $R$ of $Q$ such that $R\subset\Omega(M,L_i)$ and $R\not\subset M$. Let
$u,v$ be the endvertices of $R$, and let $M':=M[u,v]$. The fact that $M$ is
$\ell'$-shortest implies that $\ell'(M')\le \ell'(R)$. Then, replacing in $Q$
the part $R$ by $M'$, we obtain a $p$--$q$ path $Q'$ with $\ell'(Q')\le
\ell'(Q)$. If $Q'$ still meets $\Omega-P$, we repeat the procedure (treating
the pair $(Q',M)$ or $(Q',N)$), and so on. Eventually, we obtain a $p$--$q$
path $\tilde Q$ such that $\ell'(\tilde Q)\le\ell'(Q)$ and $\tilde Q\cap
(\Omega-P)=\emptyset$, yielding~\refeq{QC1} for $Q$.
 \end{proof}

From~\refeq{ellPp},\refeq{PPprim} it follows that for the updated length
function,
  \begin{numitem1} \label{eq:updatell}
the path $P'$ becomes shortest (of length $\sigma/2-a$), and the lengths of $P$
and $L(st')$ become the same.
  \end{numitem1}
Thus, the above procedure turns Subcase~1b into Subcase~1a, as required.
 \smallskip

Next we consider Case~2. Let $x=v_0,v_1,\ldots, v_k=y$ be the common vertices
of $M$ and $N$, in this order in $M$ and, accordingly, in the reverse order in
$N$ (taking into account that $M,N$ are shortest). For $p=1,\ldots,k$, let
$\omega_p$ be the region bounded by the subpaths $M(p):=M[v_{p-1},v_p]$ and
$N(p):=N[v_p,v_{p-1}]$ and not contained $H$. Then either (a) $\omega_p$ lies
in $\Omega(M,L_i)\cap\Omega(N,L_j)$ (in particular, $M(p)$ may coincide with
the reverse path $N(p)^{-1}$ to $N(p)$), or (b) $\inter(\omega_p)$ is nonempty
and lies in the complement of $\Omega(M,L_i)\cup\Omega(N,L_j)$. To slightly
simplify our considerations, we exclude the cases when $\inter(\omega_p)$ is
nonempty and contains no hole (which will lead to no loss of generality).
Namely, for each $p$ where such a situation happens, we perturb $N$, by
replacing its part $N(p)$ by $M(p)^{-1}$. Clearly the updated $s'$--$t$ path
$N$ is again shortest and of type 0.

So we will further assume that for each $p$ where $M(p)\ne N(p)^{-1}$, the
region $\omega(p)$ contains one or two holes among $H',H''$; in this case we
say that $\omega_p$ is \emph{essential}. We come to four subcases of Case~2.

\begin{description}
\item[\rm\emph{Subcase 2a}]: ~No region $\omega_p$ is essential; equivalently,
$M[x,y]=N^{-1}[y,x]$.
\item[\rm\emph{Subcase 2b}]: ~Only one $\omega_p$ is essential and it contains
exactly one hole, say, $H'$.
\item[\rm\emph{Subcase 2c}]: ~Two $\omega_p,\omega_q$ are essential
(each containing one hole).
\item[\rm\emph{Subcase 2d}]: ~One $\omega_p$ is essential and it contains both
$H',H''$.
  \end{description}

We first handle (simultaneously) Subcases~2a and~2b; they are illustrated in
the left and right fragments of the picture, respectively.

 \vspace{-0cm}
\begin{center}
\includegraphics[scale=1.0]{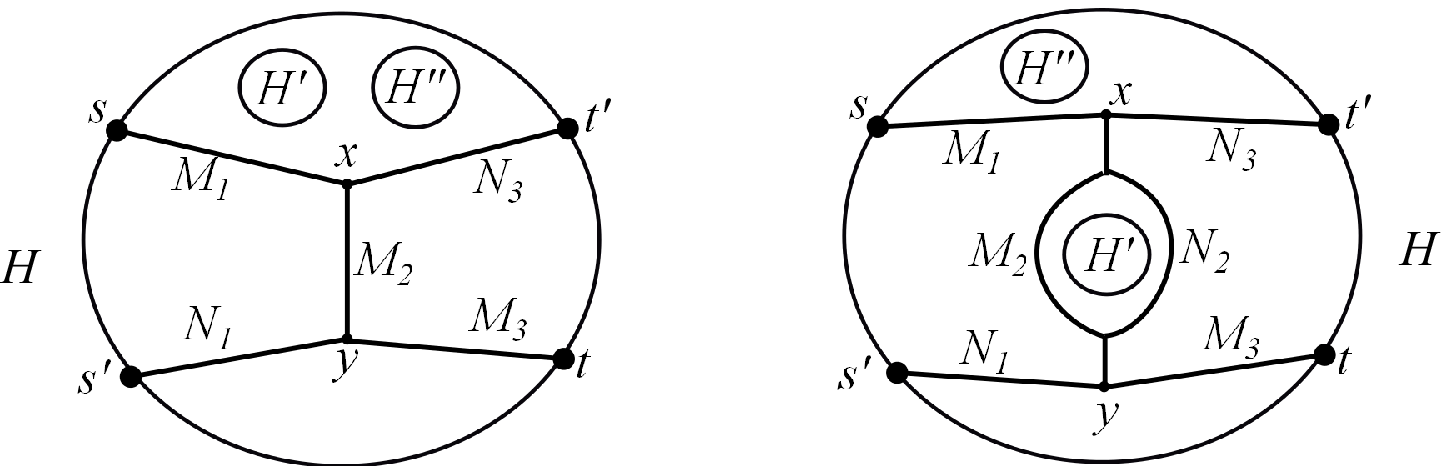}
\end{center}
\vspace{-0cm}

We are going to reduce these subcases to a situation as in Subcase~1a. As
before, let $a:=d(ss')$. Represent $M$ as $M_1\cdot M_2\cdot M_3$, and $N$ as
$N_1\cdot N_2\cdot N_3$, where $M_2:=M[x,y]$ and $N_2:=N[y,x]$ (then
$M_2=N_2^{-1}$ in Subcase~2a). Form the $s$--$t'$ path $P:=M_1\cdot N_3$ and
the $s'$--$t$ path $P':=N_1\cdot M_3$ and define
   $$
   f:=\ell(M_2)\;\; (=\ell(N_2))\quad \mbox{and}\quad \eps:=\ell(P')-d(s't).
   $$
Then $\ell(P')=d(s't)+\eps=\sigma/2-a+\eps$. Since $\ell(M)=\ell(N)=\sigma/2$,
we have
  \begin{equation} \label{eq:PMNPp}
  \ell(P)=\ell(M)+\ell(N)-\ell(P')-\ell(M_2)-\ell(N_2)=\sigma/2+a-\eps-2f.
  \end{equation}

To make the desired reduction, we use the algorithm of~\cite{kar90} to solve
the auxiliary one- or two-hole PMP with $(G,\ell,\tilde\Hscr)$, where
$\tilde\Hscr:=\{H\}$ in Subcase~2a, and $\tilde\Hscr:=\{H,H'\}$ in Subcase~2b.
Let $(\Cscr,\lambda)$ be an integer solution to it. We extract from $\Cscr$ the
set $\Cscr'$ of cuts meeting $L(s't)$ (and its opposite path $L(t's)$) and the
set $\Cscr''$ of cuts meeting $L(ss')$ (and $L(tt')$). Then
 \begin{equation} \label{eq:hp-hpp}
 h':=\sum(\lambda(X)\colon \delta X\in\Cscr')=\sigma/2-a \quad \mbox{and} \quad
  h'':=\sum(\lambda(X)\colon \delta X\in\Cscr'')=a.
  \end{equation}

Partition $\Cscr''$ as $\Cscr_1\cup\Cscr_2$, where $\Cscr_1$ consists of the
cuts not meeting $P$, and accordingly define $h_i:=\sum(\lambda(X)\colon \delta
X\in\Cscr_i)$, $i=1,2$. Each cut $\delta X\in\Cscr_2$ meets $P$ twice (since
$\delta X\cap P\ne\emptyset$ implies $|\delta X\cap M_1|= |\delta X\cap
N_3|=1$). Therefore, $\ell(P)\ge h'+2h_2$, and using~\refeq{PMNPp}
and~\refeq{hp-hpp}, we have
  $$
  2h_2\le\ell(P)-h'=(\sigma/2+a-\eps-2f)-(\sigma/2-a)=2a-2f-\eps.
  $$
This and $h_1+h_2=a$ imply
  \begin{equation} \label{eq:h1feps}
  h_1=f+\eps/2.
  \end{equation}

The following assertion is similar to Lemma~\ref{lm:Cscr1}.
  \begin{lemma} \label{lm:Cases2a2b}
In Subcases~2a,2b, ~$(\Cscr_1,\lambda_1)$ is reducible for $(G,\ell,\Hscr)$,
where $\lambda_1:=\lambda\rest{\Cscr_1}$.
  \end{lemma}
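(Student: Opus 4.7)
The plan is to follow closely the proof of Lemma~\ref{lm:Cscr1}. Three ingredients drive the argument: (a) $(\Cscr_1,\lambda_1)$ is reducible within $(G,\ell,\tilde\Hscr)$, since it is a subfamily of the packing returned by the auxiliary algorithm of \cite{kar90} and every subfamily of a reducible packing is reducible; (b) each $\delta X\in\Cscr_1$ meets both $L(ss')$ and $L(tt')$ but avoids $P=M_1\cdot N_3$; (c) since they were saturated by the auxiliary packing, the paths $M,N$ remain shortest with respect to the reduced length $\ell'$.

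First, for any pair $pq\in\Pi_{\tilde\Hscr}$ the inequality \refeq{QC1} follows at once from~(a). In Subcase~2a we have $\tilde\Hscr=\{H\}$ and it remains to handle $pq\in\Pi_{H'}\cup\Pi_{H''}$; in Subcase~2b we have $\tilde\Hscr=\{H,H'\}$ and it remains to handle $pq\in\Pi_{H''}$.

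Next, for each $\delta X\in\Cscr_1$ fix the generating set $X$ to lie in the closed region $\Omega$ bounded by $L(ss')$, $M_1$, $N_3$ and $L(t't)$ (in Subcase~2b the hole $H'$ may appear as a puncture in $\Omega$, but by design of the auxiliary PMP with $H'\in\tilde\Hscr$ the cut $\delta X$ does not separate $H'$, so $\bd(H')$ lies on one fixed side of $\delta X$). From this choice, every $p,q$ with $pq$ outside $\Pi_{\tilde\Hscr}$ satisfies $p,q\notin\inter(\Omega)-P$. Hence when $Q\cap(\Omega-P)=\emptyset$ one has $\ell'(Q)=\ell(Q)\ge d(pq)$, and the right-hand side of \refeq{QC1} reduces to $d(pq)$ since no cut in $\Cscr_1$ separates $pq$ under this alignment; so \refeq{QC1} is trivial.

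Finally, if $Q$ meets $\Omega-P$, I apply the same iterative shortcut as in Lemma~\ref{lm:Cscr1}: take a maximal subpath $R$ of $Q$ with $R\subset\Omega(M,L_i)$ and $R\not\subset M$ (or with $N$ in place of $M$), let $u,v$ be its endvertices on $M$, and replace $R$ by $M[u,v]$; using ingredient~(c), the resulting $p$--$q$ path $Q'$ satisfies $\ell'(Q')\le\ell'(Q)$ and has strictly smaller intersection with $\Omega-P$. Iterating yields a path $\tilde Q$ disjoint from $\Omega-P$ with $\ell'(\tilde Q)\le\ell'(Q)$, which reduces us to the previously treated case.

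The main obstacle is Subcase~2b, where $\Omega$ is not simply connected because of the puncture $H'$: a shortcut step must be justified even when $R$ might wind around $H'$. This is precisely why $H'$ is placed into the auxiliary hole set $\tilde\Hscr=\{H,H'\}$; the resulting cuts of $\Cscr_1$ do not separate $\bd(H')$, so each component of $\Omega-P\setminus\bd(H')$ that can contain $R$ still has $M$ (or $N$) as an $\ell'$-shortest boundary arc, and the exchange argument goes through verbatim.
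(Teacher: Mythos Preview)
Your proof follows the same approach as the paper's: split into $pq\in\Pi_{\tilde\Hscr}$ (handled by reducibility of the sub-packing for $(G,\ell,\tilde\Hscr)$) versus $pq\in\Pi_{\Hscr-\tilde\Hscr}$ (handled by shortcutting through $M$ or $N$ until the path avoids $\Omega-P$, after which $\ell'=\ell$ along it). Two remarks, though.

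First, your description of $\Omega$ is not quite right: the pieces $L(ss'),M_1,N_3,L(t't)$ do not form a closed curve. The region intended (and used in the paper) is $\Omega:=\Omega(P,L(st'))$, bounded by $P=M_1\cdot N_3$ and the full arc $L(st')=L(ss')\cdot L(s't)\cdot L(tt')$. This is what makes ``$p,q\notin\Omega-P$ for $pq\in\Pi_{\Hscr-\tilde\Hscr}$'' true.

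Second, your last paragraph is both unnecessary and inaccurate. The shortcut steps take place inside $\Omega(M,L_i)$ and $\Omega(N,L_j)$, which are type-0 regions and therefore \emph{do not} contain $H'$; hence $R$ cannot wind around $H'$ and no ``puncture'' argument is needed. Moreover, the claim that cuts in $\Cscr_1$ do not separate $\bd(H')$ is false in general: a simple cut from the two-hole packing may well meet $\bd(H')$ in two edges. The genuine reason $H'$ is put into $\tilde\Hscr$ in Subcase~2b is that $H'\subset\Omega$, so pairs in $\Pi_{H'}$ cannot be handled by the ``push out of $\Omega-P$'' trick and must instead be covered directly by ingredient~(a). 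Once you drop that paragraph and fix the description of $\Omega$, your argument matches the paper's.
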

 \begin{proof}
Let $Q$ be a $p$--$q$ path in $G$ with $pq\in\Pi_\Hscr$. We have to show
relation~\refeq{QC1} for $Q$ and $\ell':=\ell-\sum(\lambda(X)\chi^{\delta
X}\colon \delta X\in\Cscr_1)$. This is done in a way similar to the proof of
Lemma~\ref{lm:Cscr1}. More precisely, if $pq\in\Pi_{\tilde\Hscr}$,
then~\refeq{QC1} is immediate from the reducibility of $(\Cscr_1,\lambda_1)$
for $(G,\ell,\tilde \Hscr)$. And if $pq\in\Pi_{\Hscr-\tilde\Hscr}$, then both
$p,q$ are not in $\Omega-P$, where $\Omega:=\Omega(P,L(st'))$. Making, if
needed, the corresponding replacements in $Q$ using pieces of $M$ or $N$ (like
in the proof of Lemma~\ref{lm:Cscr1}), we obtain a $p$--$q$ path $Q'$ disjoint
from $\Omega-P$ and such that $\ell'(Q')\le\ell'(Q)$. Then
$\ell'(Q')=\ell(Q')$, implying~\refeq{QC1} for $Q$.
 \end{proof}

Note that since each cut $\delta X\in\Cscr_1$ does not meet $P$ and $M,N$ are
shortest, either $|\delta X\cap M_2|=1$ or $|\delta X\cap N_1|=|\delta X\cap
M_3|=1$. Then~\refeq{h1feps} implies that $(\Cscr_1,\lambda_1)$ saturates $M_2$
and uses $\eps$ units of the $\ell$-length of $P'$. It follows that for the
updated lengths,
  \begin{numitem1} \label{eq:PpM2N2}
the path $P'$ becomes shortest, and the lengths of $M_2$ and $N_2$ become zero.
  \end{numitem1}
In other words, contracting the edges with zero length, we obtain a situation
as in Subcase~1a, as required. (Note that the hole $H'$ vanishes if $|\tilde
\Hscr|=2$.)
 \smallskip

In Subcase~2c we act in a similar fashion. Suppose that $H'\subset\omega_p$,
$H''\subset \omega_q$, and $q<p$. Let $z:=v_q$. Then $H''$ is located between
$M[x,z]$ and $N[z,x]$, whereas $H'$ between $M[z,y]$ and $N[y,z]$; see the left
fragment of the picture.

 \vspace{-0cm}
\begin{center}
\includegraphics[scale=1.0]{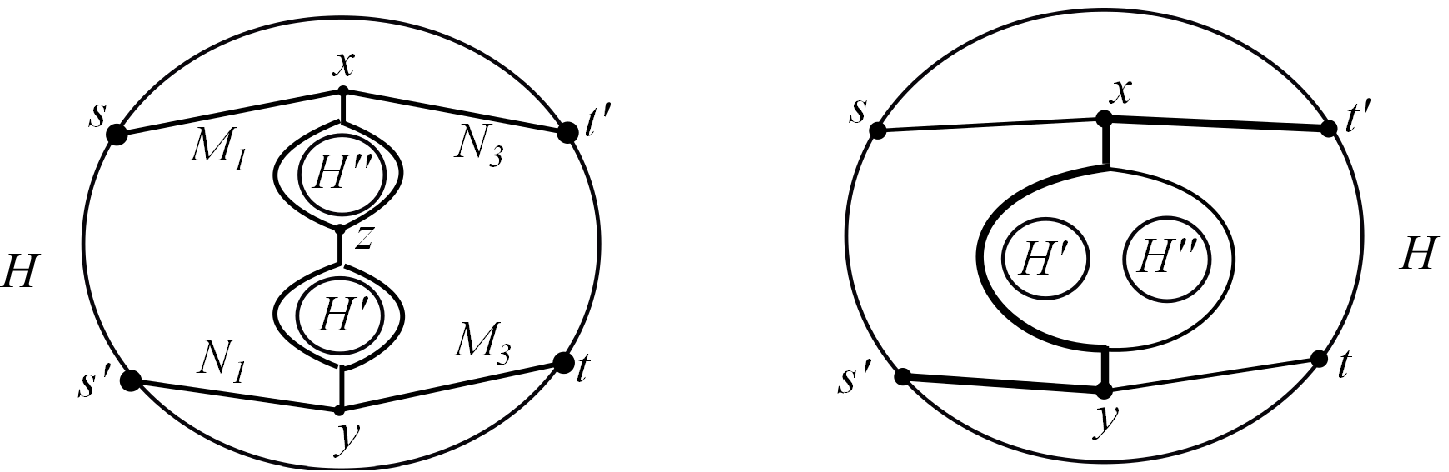}
\end{center}
\vspace{-0.2cm}

Let $P':=N_1\cdot M_3$, $\eps:=\ell(P')-d(s't)$ and $f:=\ell(M[z,y])$
($=\ell(N[y,z])$). Find an integer solution $(\Cscr,\lambda)$ to the two-hole
PMP with $(G,\ell,\tilde\Hscr)$, where $\tilde\Hscr:=\{H,H'\}$. Extract from
$\Cscr$ the set $\Cscr''$ of cuts meeting $L(ss')$ and partition $\Cscr''$ as
$\Cscr_1\cup\Cscr_2$, where $\Cscr_1$ is formed by the cuts not meeting
$M[s,z]$. Let $\lambda_1$ be the restriction of $\lambda$ to $\Cscr_1$. Arguing
as in the previous case, one can conclude that $(\Cscr_1,\lambda_1)$ is
reducible for $(G,\ell,\Hscr)$ and that $\sum(\lambda(X)\colon \delta
X\in\Cscr_1)=f+\eps/2$. Then for the reduced length function, the path $P'$
becomes shortest, and the lengths of the paths $M[z,y]$ and $N[y,z]$ become
zero. Then the hole $H'$ vanishes and we obtain PMP with two holes.
 \smallskip

It remains to consider Subcase~2d. We distinguish between two possibilities for
$s=s_i$ and $s'=s_j$.
  \smallskip

I. Suppose that $0<j-i<n$, i.e., $s_i$ and $s_j$ are not antipodal. As before,
we represent $M$ as $M_1\cdot M_2\cdot M_3$, and $N$ as $N_1\cdot N_2\cdot
N_3$, where $M_2:=M[x,y]$ and $N_2:=N[y,x]$. For the $s$--$s'$ path
$R:=M_1\cdot M_2\cdot N_1^{-1}$ and the $t$--$t'$ path $R':=M_3^{-1}\cdot N_2
\cdot N_3$, and for $a:=\ell(L(ss'))$, we have
  $$
  \ell(R)+\ell(R')=\ell(M)+\ell(N)=\sigma>2a.
  $$
Let for definiteness $\ell(R)\ge\ell(R')$; then $\ell(R)>a$. Instead of $N$, we
consider the shortest $t'$--$s'$ path $N':=N_3^{-1}\cdot M_2\cdot N_1^{-1}$
(i.e., we change $j$ to $j+n$), and handle the pair $N',M$ rather than $M,N$.
See the right fragment of the above picture where $N'$ is drawn in bold. The
path $N'$ is of type 0 and the pair $N',M$ is as in Subcase~1b (since
$\ell(R)>\ell(L(ss'))$). We apply a good reduction in this subcase (which
decreases the lengths of $L(s't)$ and $L(t's)$ and makes $R$ be shortest).
 \smallskip

II. Now suppose that $j=i+n$; then $s=t'=x$ and $s'=t=y$. Let $z:=v_{p-1}$ and
$u:=v_p$, i.e., the region $\omega_p$ containing $H'$ and $H''$ is bounded by
the paths $M[z,u]$ and $N[u,z]$. Suppose that $u\ne y$. Then we make a good
reduction, aiming to decrease the length of the path $M':=M[u,y]$
($=N^{-1}[u,y]$) to zero (and then to contract $M'$). For this purpose, we find
a solution $(\Cscr,\lambda)$ to the auxiliary one-hole problem PMP with
$(G,\ell,\{H\})$ and extract from $\Cscr$ the set $\Cscr_1$ of cuts meeting
$M'$. Since $M,N$ are shortest paths connecting the antipodal terminals $x,y$
and both holes $H',H''$ are disposed between $M[z,u]$ and $N[u,z]$, one can
conclude that $(\Cscr_1,\lambda_1)$ is reducible for $(G,\ell,\Hscr)$ and
saturates $M'$. Then, after the reduction using $(\Cscr_1,\lambda_1)$ followed
by the corresponding contractions, $u$ and $y$ become merged into one vertex.

If $x\ne z$, we handle the subpath $M[x,z]$ in a similar way.
  \medskip

We apply the good reductions by cuts described above to all pairs
$\{i,j\}\subset\{1,\ldots,2n\}$ for the hole $H$ (with $\bd(H)$ extended to be
central symmetric), and then treat the other holes $H',H''$ in a similar way,
referring to the whole process as \emph{Procedure~II}. Every time we take as
$M,N$ the most remote paths $D_i\in\Pscr_i$ and $D_j\in\Pscr_j$. Summing up the
above results, we can conclude with the following

\begin{prop} \label{pr:necklaceH}
Procedure II takes strongly polynomial time and results in $(G,\ell,\Hscr)$,
where $\ell$ is cyclically even and $|\Hscr|\le 3$, so that: for each
$H\in\Hscr$ and antipodal pairs $\{s_i,t_i\}$ and $\{s_j,t_j\}$ with $s_i\to
s_j\to t_i\to t_j$ in $\bd(H)$, the paths $M:=D(s_it_i)$ and $N:=D(s_jt_j)$ are
subject to Subcase~1a when $s_i\ne t_j$, and subject to Subcase~2d with $M\cap
N=\{s_i,s_j\}$ when $s_i= t_j$.
  \end{prop}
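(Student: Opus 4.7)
The plan is to view Procedure~II as an iterative loop that, for each ordered pair $\{i,j\}$ of antipodal positions on a hole $H$ (in a canonical order), invokes the appropriate good reduction from the subcase analysis of Section~\ref{sec:neclac}, and then repeats the whole pass until stable. Three things must be verified: validity of each step (good reduction, cyclical evenness, $|\Hscr|\le 3$), strongly polynomial complexity, and the structural claim about the final state.

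Validity is essentially already in place. Subcases~1b, 2a, 2b are covered by Lemmas~\ref{lm:Cscr1} and~\ref{lm:Cases2a2b}; Subcase~2c is handled by the directly parallel argument (a two-hole auxiliary PMP for $\{H,H'\}$, restricted to the cuts meeting $L(ss')$ but not $M[s,z]$); and Subcase~2d splits into part~I (which is Subcase~1b applied to $(N',M)$ after switching $j\mapsto j+n$ and hence inherits reducibility) and part~II (which follows the same template as Lemma~\ref{lm:Cscr1}, using that both $H',H''$ lie on one side of $M\cup N$). Cyclical evenness is preserved because every cut-metric has even intersection with every cycle, and $|\Hscr|$ is monotone non-increasing because holes can only vanish (in Subcases~2b, 2c, 2d-II).

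For complexity, I would track the combinatorial potential $\Phi:=|E|+|\{(x,y)\in V\times V:\Delta(xy)>0\}|$. By~\refeq{zero-zero} $\Phi$ is non-increasing under good reductions, and every non-trivial reduction strictly decreases it: either some edge is contracted on reaching length $0$, or a previously non-tight pair becomes tight after realignment of $D(s_it_i)$ or $D(s_jt_j)$. Since $\Phi=O(|V|^2)$, only polynomially many iterations occur, and each invokes the strongly polynomial algorithm of~\cite{kar90}.

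The structural claim holds immediately after a single reduction aimed at pair $\{i,j\}$, by the explicit relations~\refeq{updatell}, \refeq{PpM2N2} and their analogues in Subcases~2c and~2d. The delicate point, and the main obstacle, is \emph{persistence}: a later reduction targeting a different pair $\{i',j'\}$ must not spoil the form already achieved for $\{i,j\}$. Here I would combine two observations. First, each reduction is supported inside a region bounded by $\ell$-shortest paths (a lens region, or the strip between $L(ss')$ and a piece of $M$ or $N$), and Proposition~\ref{pr:0-lens} together with~(C4) forces the boundary cycle $\bd(H)$ and the backbone paths $D(s_it_i), D(s_jt_j)$ to remain $\ell$-shortest of length $\sigma/2$ after any subsequent reduction. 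Second, by~\refeq{zero-zero} the lattice $\Pscr(s_it_i)$ can only shrink, so its maximal element $D(s_it_i)$ changes only by edge contractions induced by lengths dropping to zero. Consequently the Subcase~1a (resp. Subcase~2d with $M\cap N=\{s_i,s_j\}$) form of pair $\{i,j\}$ is preserved under later reductions. Since $\Phi$ strictly decreases each time the outer loop performs any work, the loop terminates with every pair $\{i,j\}$ simultaneously in the claimed form, yielding the proposition.
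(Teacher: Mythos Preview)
The paper offers no formal proof of this proposition beyond the phrase ``Summing up the above results, we can conclude with the following''; it treats the statement as a direct summary of the subcase analysis in Section~\ref{sec:neclac} and does not explicitly argue persistence or give a complexity bound beyond the implicit one from processing each pair $\{i,j\}$. Your attempt is therefore more detailed than what the paper supplies.

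That said, your persistence argument contains a concrete error. You write that ``by~\refeq{zero-zero} the lattice $\Pscr(s_it_i)$ can only shrink, so its maximal element $D(s_it_i)$ changes only by edge contractions''. This is backwards: \refeq{zero-zero} says that $\eps(x|st)=0$ is preserved under a good reduction, i.e., every $\ell$-geodesic remains an $\ell'$-geodesic. Hence $\Pscr(s_it_i)$ can only \emph{grow} under a good reduction, and the most remote path $D(s_it_i)$ may move \emph{outward} from $L_i$, not inward. This undermines your persistence claim and, with it, your complexity bound via $\Phi$ (whose strict decrease you justify only by the unsubstantiated assertion that ``a previously non-tight pair becomes tight''---but $\Delta(xy)$ is a minimum over all $st\in\Pi_\Hscr$ and may well have been zero already for every pair on $P'$).

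A correct persistence argument must instead show that once the pair $\{i,j\}$ has been placed into Subcase~1a (so the assembled path $P'$ is shortest), any subsequent, possibly larger $D_i,D_j$ still yield a shortest $P'$. This is believable---the old $P'$ stays shortest by~\refeq{zero-zero}, and the new $D_i,D_j$ still have length $\sigma'_H/2$, forcing the new $P'$ to have the right length---but it needs to be argued and is not what you wrote. Likewise, the natural complexity potential is the number of pairs $\{i,j\}$ not yet in final form, which is monotone once persistence is established; your potential $\Phi$ does not obviously work.
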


Removing the extra terminals that were added before the procedure, we obtain a
similar result in terms of the original graph. More precisely, for $H\in\Hscr$,
let $s_1,s_2,\ldots,s_m=s_0$ be the vertices of (the original) $\bd(H)$ in the
clockwise order, and define $\Lscr^{\max}_H$ to be the set of inclusion-wise
\emph{maximal} shortest paths $s_is_{i+1}\cdots s_{i'}$ in $\bd(H)$ (taking
indices modulo $m$). For such a path, we denote $s_{i'}$ by $t_i$. Also we
denote the set of indices $i$ for which $\Lscr^{\max}_H$ has a path starting
with $s_i$ by $I=I_H$. Like the central symmetric case, for $i\in I$, we denote
$L(s_it_i)$ by $L_i$ and write $\Pscr_i,D_i,\Omega_i$ for the set of shortest
$s_i$--$t_i$ paths of type 0, the most remote path in $\Pscr_i$, and the region
$\Omega(D_i,L_i)$, respectively. Then (after removing the extra terminals) the
following holds:
 \begin{numitem1} \label{eq:casei}
for $H\in\Hscr$ and $i,j\in I_H$ with $s_i\to s_j\to t_i\to t_j$: (a) if
$s_i=t_j$ (and therefore $s_j=t_i$), then $D_i\cap D_j=\{s_i,s_j\}$; and (b) if
$s_i\ne t_j$, then for any common vertex $v$ of $D_i$ and $D_j$, the
$s_j$--$t_i$ path $D_j[s_j,v]\cdot D_i[v,t_i]$ is shortest.
  \end{numitem1}

For $H\in\Hscr$ fixed and $i\in I_H$, let $G_i=(V_i,E_i)$ be the directed graph
that is the union of shortest paths $P\in\Pscr(s_it_i)$, each $P$ being
directed from $s_i$ to $t_i$. We will take an advantage from the following
 \begin{prop} \label{pr:neclace}
{\rm(i)} The directions of edges in all graphs $G_i$ are agreeable.

{\rm(ii)} In the graph $N_H=(W,U):=\cup(G_i\colon i\in I)$, for each simple
directed cycle $C$, the region bounded by $C$ and containing $H$ contains no
other hole.

{\rm(iii)} $N_H$ admits a \emph{cyclic potential} $\pi:W\to \Zset$, which means
that for each edge $e=(u,v)\in U$, $\ell(e)=\pi(v)-\pi(u)$ if
$\pi(u)\le\pi(v)$, and $\ell(e)=\pi(v)-\pi(u)+\sigma_H$ otherwise.
  \end{prop}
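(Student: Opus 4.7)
My plan is to construct the cyclic potential $\pi$ of~(iii) directly and then read off~(i) and~(ii) as consequences. Fix a base vertex $s_0\in V_H$ and let $\pi$ record clockwise $\ell$-distance along $\bd(H)$: $\pi(s_k):=\ell(L(s_0 s_k))$. For each $i\in I_H$ and $v\in V_i$, tentatively set $\pi_i(v):=\pi(s_i)+d(s_i,v)$. By the definition of $G_i$, every directed edge $(u,v)\in G_i$ satisfies $\pi_i(v)-\pi_i(u)=\ell(e)$, so the identity demanded by~(iii) holds within each $G_i$ in isolation; the whole game is to show the $\pi_i$'s agree modulo $\sigma_H$ on overlaps.

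For $i,j\in I_H$ with $s_i\to s_j\to t_i\to t_j$ and $s_i\ne t_j$, the triangle inequality together with~(C4) yields $|d(s_i,v)-d(s_j,v)|\le\ell(L(s_i s_j))$ for every $v\in V_i\cap V_j$. To promote this to the equality $d(s_i,v)-d(s_j,v)=\ell(L(s_i s_j))$ needed for agreement, I would use property~\refeq{casei}(b) as a base case (it rearranges to exactly this equality for common vertices of $D_i$ and $D_j$) and then propagate: for general $v\in V_i\cap V_j$, any shortest $s_i$--$v$ path may be taken inside $\Omega_i$ and so must enter the common region $\Omega_i\cap\Omega_j$ through either $L_i\cap L_j=L(s_j t_i)$ (first hitting $s_j$) or through $D_j$; the planarity of $\Omega_i\cap\Omega_j$ combined with the shortness of $D_i, D_j$ should force the path through $s_j$, yielding the identity. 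The degenerate case $s_i=t_j$ is handled separately using~\refeq{casei}(a), since then $D_i\cap D_j=\{s_i,s_j\}$ and any shared vertex is already a boundary terminal.

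Once well-definedness is established, (iii) is immediate with a consistent integer lift, and (i) and (ii) follow quickly. For~(i): if $\{u,v\}$ were directed $u\to v$ in $G_i$ and $v\to u$ in $G_j$, applying~(iii) to both orientations forces $2\ell(e)\equiv 0\pmod{\sigma_H}$; the surviving possibility $\ell(e)=\sigma_H/2$ is eliminated by unwinding the defining equality $d(s_i,u)+\ell(e)+d(v,t_i)=\ell(L_i)\le\sigma_H/2$, which forces $u=s_i,v=t_i$, and analogously $v=s_j,u=t_j$, contradicting $s_i\ne t_j$. For~(ii): by~\refeq{barrier}, no vertex of any other hole $H'\ne H$ lies in $\inter(\bigcup_{i\in I_H}\Omega_i)$, so $\bigcup_i\Omega_i$ forms a planar annular collar around $H$ disjoint from every other hole; any simple directed cycle $C\subseteq N_H\subseteq\bigcup_i\Omega_i$ whose bounded side contains $H$ must have that side contained in $H\cup\bigcup_i\Omega_i$, hence free of any other hole.

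I expect the main obstacle to be the planar propagation step in the well-definedness proof: extending~\refeq{casei}(b) from $D_i\cap D_j$ to all of $V_i\cap V_j$ requires ruling out shortcuts through the interior of $\Omega_i\cap\Omega_j$ that bypass $s_j$, and a clean proof may need a separate argument in the spirit of the 0-lens-freeness of Prop.~\ref{pr:0-lens} but adapted to same-hole geodesics within the necklace.
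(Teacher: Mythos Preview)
Your high-level strategy matches the paper's exactly: define $\pi_i(v):=\pi(s_i)+d(s_i,v)$ on each $G_i$ and prove the $\pi_i$'s agree on overlaps. You have correctly located the only real difficulty, and your derivations of~(i) and~(ii) from~(iii) are fine (indeed the paper leaves those essentially implicit).

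The gap is precisely the ``propagation'' step you flag, and your proposed fix is both vaguer and harder than necessary. The identity you need, $d(s_i,v)-d(s_j,v)=\ell(L(s_is_j))$ for $v\in V_i\cap V_j$, is equivalent (using that $v$ lies on a shortest $s_i$--$t_i$ path) to $d(s_j,v)+d(v,t_i)=d(s_j,t_i)$, i.e.\ to $v$ lying on a shortest $s_j$--$t_i$ path. The paper gets this in one stroke by the structural observation
\[
G_i\cap G_j \;=\; \bigcup\Pscr(s_j t_i),
\]
which in turn follows from the region identity $\Omega_i\cap\Omega_j=\Omega\bigl(D(s_jt_i),L(s_jt_i)\bigr)$. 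One inclusion is immediate (prepend $L(s_is_j)$ or append $L(t_it_j)$ to any path in $\Pscr(s_jt_i)$ to get a member of $\Pscr_i$ or $\Pscr_j$); the other is exactly what~\refeq{casei}(b) says, since any common vertex $v$ of $D_i,D_j$ gives a shortest $s_j$--$t_i$ path $D_j[s_j,v]\cdot D_i[v,t_i]$, so the outer boundary of $\Omega_i\cap\Omega_j$ is itself a member of $\Pscr(s_jt_i)$ and hence bounded by $D(s_jt_i)$. With this identification, $\pi_i$ and $\pi_j$ both equal $\pi(s_j)+d(s_j,\cdot)$ on $G_i\cap G_j$, and you are done---no path-chasing or propagation needed. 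Your suggested route (force a shortest $s_i$--$v$ path ``through $s_j$'' by planarity) over-specifies what is required and, if pushed through, would end up re-deriving this same structural fact anyway.
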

 \begin{proof}
For $k=1,\ldots,m=|V_H|$, define $\pi(s_k):=\ell(L(s_1s_k))$. Then each edge
$e=s_ks_{k+1}$ of $\bd(H)$ satisfies $\ell(e)=\pi(s_{k+1})-\pi(s_k)$, taking
indices modulo $m$ and taking lengths/potentials modulo $\sigma=\sigma_H$.
Hence $\ell(L(s_it_i))= \pi(t_i)-\pi(s_i)$ for each $i\in I$.

In order to extend $\pi$ to the other vertices of $N_H$ we first introduce, for
each $i\in I$, its own potential $\pi_i:= V_i\to \Zset$ as
  $$
 \pi_i(v):=\pi_i(s_i)+\ell(P),
  $$
where $v\in V_i$ and $P$ is a directed $s_i$--$v$ path in $G_i$ (in particular,
$\pi_i(s_i)=\pi(s_i)$). Then $\pi_i$ satisfies $\ell(e)=\pi_i(v)-\pi_i(u)$ for
each $e=(u,v)\in E_i$.

We assert that for any $i,j\in I$, the potentials $\pi_i$ and $\pi_j$ coincide
on $G_i\cap G_j$.

Indeed, unless $s_i, s_j$ are antipodal (in which case $G_i\cap G_j$ consists
of two isolated terminals $s_i=t_j$ and $s_j=t_i$ and the assertion is
trivial), the region $\Omega_i\cap\Omega_j$ lies between the shortest paths
$L(s_jt_i)$ and $D(s_jt_i)$ (in view of~\refeq{casei}). This implies that the
subgraphs of $G_i$ and $G_j$ lying in $\Omega_i\cap \Omega_j$ are the same and
equal to $G_i\cap G_j$. Moreover, the latter is just the union of shortest
$s_j$--$t_i$ paths: $G_i\cap G_j=\cup(P\in\Pscr(s_jt_i)$). Since
$\pi_i(s_j)=\pi(s_j)=\pi_j(s_j)$ and $\pi_i(t_i)=\pi_j(t_i)$, the potentials
$\pi_i$ and $\pi_j$ must coincide on each shortest $s_j$--$t_i$ path, yielding
the result.
\end{proof}

We call the graph $N=N_H$ defined in this proposition the \emph{necklace} for
$H$. (Depending on the context in what follows, we may also think of the
necklace as the underlying undirected graph.) Two examples are illustrated in
the picture; here for simplicity all edges have the same length.

 \vspace{-0.2cm}
\begin{center}
\includegraphics[scale=0.9]{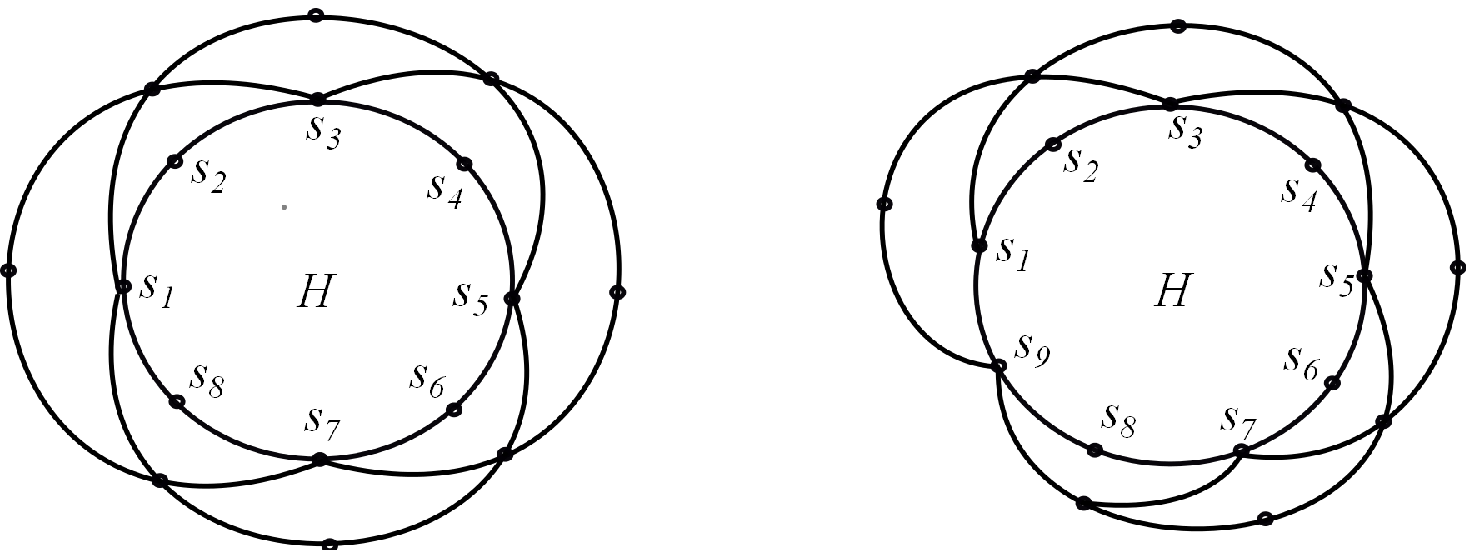}
\end{center}
\vspace{-0.2cm}

Denoting the set of all $H$-geodesics of type $0$ by $\Pscr^0_H$, we can
summarize the above observations and results as follows.
 \begin{corollary} \label{cor:necklace}
The initial problem can be reduced, in strongly polynomial time, to PMP with
$(G,\ell,\Hscr)$ satisfying (C1)--(C4) and the next property:
\begin{description}
\item{\rm(C5)} ~for each $H\in\Hscr$, the subgraph $\cup (P\in\Pscr^0_H)$ of $G$
can be directed so that any simple directed cycle separates $H$ from
$\Hscr-\{H\}$ and has $\ell$-length exactly $\sigma_H$.
 \end{description}
 \end{corollary}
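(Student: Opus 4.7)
The plan is to assemble the corollary from the reductions already developed. I would apply, in sequence: the preprocessing operations (OP1)--(OP5), which secure (C1)--(C3); Reduction~I of Section~\ref{sec:red_cuts}, which secures (C4); and Procedure~II of this section, which produces Propositions~\ref{pr:necklaceH} and~\ref{pr:neclace} for each of the three holes. All three stages are strongly polynomial, and the good reductions preserve zeros of $\eps$ and $\Delta$ by~\refeq{zero-zero}, so the tightness properties underlying (C1)--(C4) survive to the end; any zero-length edges that appear are immediately contracted via~(OP1).

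Next, for each $H\in\Hscr$ I would identify $\cup(P\colon P\in\Pscr^0_H)$ with the necklace $N_H=\cup_{i\in I_H} G_i$ of Proposition~\ref{pr:neclace}. The inclusion $N_H\subseteq\cup\Pscr^0_H$ is immediate, since every $G_i$ is a union of type-$0$ $H$-geodesics from $s_i$ to $t_i$. For the reverse, take $P\in\Pscr^0_H$ with endpoints $s,t\in V_H$ and let $L=L(st)\subseteq\bd(H)$ be the boundary arc satisfying $\tau(P,L)=0$. By~(C4) together with the absence of excessive type-$0$ pairs after Reduction~I, $L$ is shortest, hence contained in some maximal shortest arc $L_i=L(s_it_i)\in\Lscr^{\max}_H$. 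Concatenating $L(s_is)\cdot P\cdot L(tt_i)$ yields an $s_i$--$t_i$ path of $\ell$-length $\ell(L_i)=d(s_it_i)$, which is therefore shortest and of type~$0$, so it lies in $G_i$; consequently $P\subseteq G_i\subseteq N_H$.

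Finally, I would direct $N_H$ coherently using Proposition~\ref{pr:neclace}(i) and verify~(C5). The separation condition is exactly Proposition~\ref{pr:neclace}(ii). For the length condition, let $C=v_0v_1\cdots v_m=v_0$ be a simple directed cycle in $N_H$. Using the cyclic potential $\pi$ of Proposition~\ref{pr:neclace}(iii), each edge contributes $\ell(v_iv_{i+1})=\pi(v_{i+1})-\pi(v_i)+\eta_i\sigma_H$ with $\eta_i\in\{0,1\}$; telescoping gives $\ell(C)=k\sigma_H$ where $k=\sum_i\eta_i$, and positivity of all edge lengths forces $k\ge 1$. The main obstacle is the upper bound $k\le 1$, which I would settle via a topological winding-number argument: regarding $\pi$ as a continuous angular coordinate on the sphere with the hole $H$ removed, $k$ equals the winding number of the simple closed curve $C$ around any point of $H$, and a simple closed curve in the sphere winds at most once around any point off the curve. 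Hence $k=1$ and $\ell(C)=\sigma_H$, establishing~(C5).
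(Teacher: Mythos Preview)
Your proposal is correct and follows exactly the route the paper intends: the corollary is presented there as a summary of the preceding reductions together with Proposition~\ref{pr:neclace}, and you have simply made the assembly explicit---applying preprocessing, Reduction~I, and Procedure~II in turn, then identifying $\cup\Pscr^0_H$ with $N_H$ and reading off (C5) from parts (i)--(iii). Your winding-number justification that a simple directed cycle has length exactly $\sigma_H$ (and not a higher multiple) is a detail the paper leaves implicit; it is sound, and an equivalent combinatorial version is to note that every inner face of $N_H$ is bounded by two oppositely directed paths of equal $\pi$-length (else its boundary would itself be a simple directed cycle, contradicting (ii)), so $C$ and $\bd(H)$ are homologous through faces of zero net potential change, forcing $\ell(C)=\ell(\bd(H))=\sigma_H$.
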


We denote the set of such cycles in $N_H$ embracing $H$ by $\Cscr_H$. For
$C\in\Cscr_H$, let $\Omega(C)$ denote the closed region bounded by $C$ and
containing $H$. The correspondence $C\mapsto\Omega(C)$ leads to representing
$\Cscr_H$ as a distributive lattice with operations $\wedge,\vee$ defined by
the relations: for $C,C'\in\Cscr_H$, ~$\Omega(C\wedge
C')=\Omega(C)\cap\Omega(C')$ and $\Omega(C\vee C')=\Omega(C)\cup\Omega(C')$.
Then $\bd(H)$ is the minimal element of $\Cscr_H$, and we denote the maximal
element in it as $D_H$; so
  $$
  \ell(D_H)=\sigma_H \quad\mbox{and}\quad \Omega(C)\subseteq \Omega(D_H)
       \;\;\mbox{for all $C\in\Cscr_H$}.
  $$

Note that~\refeq{barrier} implies the following property:
 \begin{numitem1} \label{eq:distHH}
for distinct $H,H'\in\Hscr$, no $H'$-geodesic meets $\inter(\Omega(D_H))$.
  \end{numitem1}

We finish this section with one important special case.
\smallskip

\noindent\textbf{Definition.} ~The necklace $N_H$ is called \emph{trivial} if
$N_H=\bd(H)$.

\begin{prop} \label{pr:Htype0}
~If $\tau(H)=0$ then $N_H$ is trivial.
  \end{prop}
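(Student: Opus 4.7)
Plan: Argue by contradiction. Suppose $N_H$ is nontrivial; then for some $i\in I_H$ there is a path $P\in\Pscr(s_it_i)$ with $P\ne L_i$. By passing to a minimal sub-lens we may assume $P\cap L_i=\{s_i,t_i\}$. The closed region $\Omega:=\Omega(P,L_i)$ is then bounded by the two $H$-geodesics $P$ and $L_i$ and, since $\tau(P,L_i)=0$, contains no hole.

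My plan is to adapt the proof of Proposition~\ref{pr:0-lens} (non-existence of $0$-lenses) to this situation, with the pair $(P,L_i)$ playing the role of the pair $(Q,Q')$ there. The core step is to prove the analog of that Claim: every edge $e$ of $G_\Omega:=G\cap\Omega$, and every pair of vertices on a common face of $G_\Omega$ lying in $\Omega$, is contained in a shortest $s_i$-$t_i$ path within $G_\Omega$. For such an edge $e$, I pick, using (C2), an $\Hscr$-geodesic $\tilde L$ through $e$. Since $\Omega\subseteq\Omega(D_H)$, property~\refeq{distHH} rules out $\tilde L$ being a geodesic for a hole $H'\ne H$, so $\tilde L$ must be an $H$-geodesic; and $\tau(H)=0$ then yields $\tau(\tilde L)=0$, so one side of $\tilde L$ contains no hole. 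Combining a suitable subpath of $\tilde L$ with appropriate pieces of $L_i$ and/or $P$ --- by the same case analysis as in cases (a), (b) at the end of the proof of Proposition~\ref{pr:0-lens} --- yields a shortest $s_i$-$t_i$ path through $e$ inside $G_\Omega$.

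With the Claim in hand, the conclusion of Proposition~\ref{pr:0-lens} carries over verbatim: direct the edges of $G_\Omega$ so that each shortest $s_i$-$t_i$ path becomes a directed $s_i$-$t_i$ path, making $G_\Omega$ acyclic; consider any inner face $F$ of $G_\Omega$ in $\Omega$, whose boundary splits into two directed paths $A,B$ between a common source and sink. If both $A$ and $B$ contained intermediate vertices, the Claim would give a shortest $s_i$-$t_i$ path through some $u\in A$ and $v\in B$, forcing a directed cycle in $G_\Omega$. So one of $A,B$ consists of a single edge, which, since $\ell(A)=\ell(B)$, is dominating in $F$, contradicting~(C3). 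This contradiction proves $N_H=\bd(H)$.

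The main obstacle I foresee is the proof of the Claim, and specifically the ``mixed'' case where $\tilde L\cap\Omega$ enters $\Omega$ across $P$ and exits across $L_i$ (so that $\tilde L$ does not directly form a 0-lens with either $P$ or $L_i$). Here the hypothesis $\tau(H)=0$ is crucial: together with~\refeq{distHH} it pins down $\tilde L$ as an $H$-geodesic of type zero, so that its hole-free side is aligned with $\Omega$ and the desired exchange --- using subpaths of $L_i$ (which is itself shortest) to patch together pieces of $\tilde L$ and $P$ --- produces the required shortest $s_i$-$t_i$ path through $e$ in $G_\Omega$.
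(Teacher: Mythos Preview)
Your strategy is different from the paper's, and while the Claim you aim for is true, the route you sketch for it does not go through as written.

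The paper argues much more directly, using the necklace structure already built in Proposition~\ref{pr:neclace}. Assuming $N_H\ne\bd(H)$, it picks a face $F$ of $G$ inside $\Omega(D_H)$ and, using~(C3), finds two vertices $u,v\in\bd(F)$ that do \emph{not} lie on a common directed path of $N_H$; one of them, say $v$, is then forced to lie in $\inter(\Omega(D_H))$. By~(C2) there is an $\Hscr$-geodesic $Q$ through $u,v$; by~\refeq{distHH} it must be an $H$-geodesic, and $\tau(H)=0$ gives $\tau(Q)=0$. Hence $Q\in\Pscr^0_H\subseteq N_H$ is itself a directed path through $u$ and $v$ --- contradicting the choice of $u,v$. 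That is the whole proof: five lines, no re-derivation of any lens-type claim.

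Your plan instead tries to re-prove, inside the sub-lens $\Omega$, the very statement (``every edge lies on a shortest $s_i$--$t_i$ path'') that the necklace structure already encodes globally. The specific gap is in the mixed case. In Proposition~\ref{pr:0-lens}, case~(b), the concatenation $Q[x,u]\cdot L'[u,v']\cdot Q'[v',y]$ is shortest because $x,v',y$ all lie on the \emph{same} geodesic $P'$ (for the \emph{other} hole $H'$), so $d(x,v')+d(v',y)=d(x,y)$ is automatic. In your setting both boundary paths $P$ and $L_i$ are $H$-paths, and the geodesic $\tilde L$ through $e$ is also an $H$-geodesic; when $\tilde L$ enters $\Omega$ across $P$ at $u$ and exits across $L_i$ at $v$, there is no second-hole geodesic to supply the needed equality. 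Concretely, for the concatenation $P[s_i,u]\cdot\tilde L[u,v]\cdot L_i[v,t_i]$ to be shortest you need $d(s_i,u)+d(u,v)+d(v,t_i)=d(s_i,t_i)$, and the symmetric option needs the complementary equality; their sum is $2d(s_i,t_i)+2d(u,v)$, so neither follows from mere triangle inequalities. What actually forces one of them is the cyclic potential $\pi$ on $N_H$ from Proposition~\ref{pr:neclace}(iii), together with~(ii) ruling out directed cycles inside $\Omega$ --- but once you invoke that, you are using exactly the machinery the paper exploits in one stroke. Your phrase ``its hole-free side is aligned with $\Omega$'' does not by itself pin down the required distance relation.

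So: the approach can be completed, but not ``by the same case analysis as in (a),(b)''; it needs the potential/necklace structure, at which point the paper's five-line argument is the cleaner way to package the same idea.
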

(Note that the converse need not hold.)
\smallskip

 \begin{proof}
Suppose that this is not so. Then $D_H\ne\bd(H)$ and $\Omega(D_H)$ contains a
face $F\ne H$ of $G$ (which lies in a face of $N_H$ but need not coincide with
the latter). We can choose two vertices $u,v$ in $\bd(F)$ not contained in a
directed path of $N_H$ (taking into account that $F$ has no dominating edge,
by~(C3)). Then at least one of these vertices, $v$ say, is not in $D_H$.
By~(C2), $u$ and $v$ belong to an $\Hscr$-geodesic $Q$. By~\refeq{distHH}, $Q$
cannot be an $H'$-geodesic for $H'\ne H$. So $Q$ is an $H$-geodesic. Moreover,
$\tau(Q)=0$. Then $Q$ turns into a directed path in $N_H$ containing $u$ and
$v$; a contradiction.
\end{proof}

%-------------------------- SEC. 6

\section{Elimination of inner edges} \label{sec:edge_free}

In this section we demonstrate one more sort of reducible cuts, aiming to
obtain the following result (as a weakened version of
Proposition~\ref{pr:without_faces}).

\begin{prop} \label{pr:inner_edges}
When $|\Hscr|=3$, one can find, in strongly polynomial time, a reducible
collection of cuts with integer weights so that the reduction by these cuts
results in a triple $(G',\ell',\Hscr')$ with $|\Hscr'|\le 3$ and $G'$ having no
inner edges (i.e., covered by the boundaries of holes).
  \end{prop}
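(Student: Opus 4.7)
The plan is to find the reducible cut collection in two stages, twice invoking the two-hole algorithm of~\cite{kar90} on carefully chosen subproblems and extracting, in each application, only cuts that are ``localized'' around a single hole, so that they cannot spoil distances on the other holes. For the first stage, for each hole $H\in\Hscr$ I would consider the subgraph $G_H$ of $G$ lying inside the region $\Omega(D_H)$ and view it as a two-hole problem with $\bd(H)$ and $D_H$ playing the roles of holes (both are cycles of length $\sigma_H$ by Corollary~\ref{cor:necklace}). Running the algorithm of~\cite{kar90} on $(G_H,\ell,\{H,D_H\})$ (with $\ell$ restricted to $G_H$), I obtain an integer packing of simple cuts realizing the distances on both boundaries, and extract the subcollection $\Cscr^\circ_H$ of those cuts $\delta X$ with $X$ lying strictly between $\bd(H)$ and $D_H$ (equivalently, cycles in $\Cscr_H$ separating $H$ from $D_H$). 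The first candidate reducible collection is $\Cscr^*:=\bigcup_{H\in\Hscr}\Cscr^\circ_H$ with the inherited weights.

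The crucial verification is that $\Cscr^*$ is reducible for $(G,\ell,\Hscr)$. Following the template of Lemmas~\ref{lm:Cpreduc}, \ref{lm:Cscr1}, and~\ref{lm:Cases2a2b}, I would show for every $pq\in\Pi_\Hscr$ and every $p$--$q$ path $Q$ in $G$ the required lower bound on the reduced length $\ell'(Q)$. When $pq\in\Pi_H$, the bound reduces to reducibility of $\Cscr^\circ_H$ inside $G_H$, together with a path-replacement step that reroutes any subpath of $Q$ leaving $\Omega(D_H)$ along $D_H$ (which remains $\ell'$-shortest since $\Cscr^\circ_H$ saturates it). When $pq\in\Pi_{H'}$ with $H'\ne H$, property~\refeq{distHH} ensures that every $\ell$-geodesic for $pq$ avoids $\inter(\Omega(D_H))$, so any crossing of $Q$ into $\inter(\Omega(D_H))$ can be pushed back to $D_H$ without increasing $\ell'(Q)$. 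After contracting the resulting zero-length edges via~(OP1), each $\Omega(D_H)$ collapses so that $D_H$ merges with $\bd(H)$ and every inner edge strictly inside $\Omega(D_H)$ is eliminated; every necklace becomes trivial, matching the situation of Proposition~\ref{pr:Htype0}.

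The main obstacle, and the substance of the second stage, is eliminating the inner edges that survive in the ``middle'' region outside all $\Omega(D_H)$'s after the first stage. With all necklaces trivial, only $H$-geodesics of type~$1$ remain relevant in this region (by~(C4) and the absence of type-$0$ geodesics), which rigidifies the middle-region structure. My plan is to invoke~\cite{kar90} a second time on $(G,\ell,\{H,H'\})$ for an appropriate pair $\{H,H'\}\subset\Hscr$, treating the third hole $H''$ as an inner face, and extract the simple cuts that separate $H''$ from $\{H,H'\}$ (i.e., cuts going around $H''$). The hardest point is establishing reducibility of this second collection for the full three-hole problem: the extracted cuts must cover the surviving inner edges while preserving $\Pi_{H''}$-distances. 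I expect this to follow from a rerouting argument exploiting the non-existence of $0$-lenses (Proposition~\ref{pr:0-lens}) together with the rigid middle-region structure produced by Stage~1, iterating the pair of holes until no inner edges remain.
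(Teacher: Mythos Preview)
Your Stage~1 is vacuous as stated. The cuts you want to extract---those $\delta X$ with $X$ ``strictly between $\bd(H)$ and $D_H$'', i.e., the circular cuts corresponding to cycles $C\in\Cscr_H$---separate \emph{no} pair in $\Pi_H\cup\Pi_{D_H}$ (every vertex of $\bd(H)$ lies in $X$, every vertex of $D_H$ lies outside $X$). Hence they carry weight~$0$ in any packing realizing distances on the two holes of $(G_H,\{H,D_H\})$, and $\Cscr^\circ_H=\emptyset$. Even under a more charitable reading (extract cuts not meeting $D_H$), no edge on $D_H$ loses length, so $D_H$ cannot merge with $\bd(H)$; the annulus does not collapse. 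Stage~2 inherits the problem: with Stage~1 doing nothing, you are left with the full difficulty, and ``extract cuts going around $H''$'' from a two-hole packing for $\{H,H'\}$ runs into the same issue---such cuts separate no pair in $\Pi_H\cup\Pi_{H'}$, hence do not occur in the packing.

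The paper's Reduction~III takes an entirely different, local and iterative, route. It first proves structural facts (Lemmas~\ref{lm:nontrivNH}--\ref{lm:HHpconnect}) pinning down which hole $H$ has type~$1$. Then for each inner face $F$ sharing an edge $e=xy$ with $H$, Lemma~\ref{lm:Gammap} locates a vertex $z\in V_F$ and a type-$1$ $H$-geodesic $P$ through $x,z$ with strict inequality $d(xz)+d(xy)>d(yz)$. One auxiliary triangle $xyz$ is added, $P$ is used to split the plane, and a two-hole packing is computed on one side; from it the cuts through \emph{both} $e$ and the auxiliary edge $e'=xz$ are extracted (Lemma~\ref{lm:C1lambda1} proves reducibility). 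Each such step turns the strict inequality into equality, forcing some excess $\eps(v|st)$ to drop to zero and some necklace to \emph{grow} (not collapse), giving the $O(|V|^3)$ iteration bound. The key idea you are missing is this local triangle-based extraction tied to a type-$1$ geodesic; global ``circular'' cuts around a hole cannot substitute for it.
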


Let $(G,\ell,\Hscr)$ be as in Corollary~\ref{cor:necklace}. In what follows,
until otherwise is explicitly said, we assume that
 \begin{numitem1} \label{eq:1edge}
$G$ has an inner edge or an inner face $F$ with $|V_F|\ge 4$.
  \end{numitem1}

\begin{lemma} \label{lm:nontrivNH}
Suppose that for some $H\in\Hscr$, the necklace $N_H$ is nontrivial. Then the
other two holes are of type 0 (and their necklaces are trivial, by
Proposition~\ref{pr:Htype0}).
  \end{lemma}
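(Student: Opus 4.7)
The plan is to argue by contradiction. Assume $N_H$ is nontrivial while for some other hole, say $H'\in\Hscr-\{H\}$, we have $\tau(H')\ge1$. Then there exists an $H'$-geodesic $Q$ with endpoints $s',t'\in V_{H'}$ such that $\tau(Q,L)=\tau(Q,L')=1$ for the two paths $\{L,L'\}=\Lscr_{H'}(s't')$; in particular $Q$ separates $H$ from $H''$, and I choose the labeling so that $H\subset\Omega(Q,L)$ while $H''\subset\Omega(Q,L')$.

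The first substantive step is a topological containment: $\Omega(D_H)\subset\Omega(Q,L)$. By~\refeq{distHH} (applied to the pair $(H,H')$), the path $Q$ does not meet $\inter(\Omega(D_H))$; and $\bd(H')\supset L$ is disjoint from $\inter(\Omega(D_H))$ since $H$ is the only hole enclosed by $D_H$. Therefore the Jordan curve $Q\cup L$ is disjoint from the open disc $\inter(\Omega(D_H))$. Because $\inter(\Omega(D_H))$ is connected and meets $\Omega(Q,L)$ through $\inter(H)$, a standard connectivity argument yields $\inter(\Omega(D_H))\subset\inter(\Omega(Q,L))$, and hence the closed containment above.

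Next I exploit the nontriviality of $N_H$: since $D_H\ne\bd(H)$, there is a vertex $v$ of $D_H$ lying off $\bd(H)$; such $v$ is an inner vertex of $G$, and the edges of $D_H$ incident to $v$ are inner edges of $G$. By Corollary~\ref{cor:necklace}(C5) and Proposition~\ref{pr:neclace}, $v$ is traversed by a directed $s$--$t$ subpath $P$ of $N_H$ with $s,t\in V_H$, and $P$ is an $H$-geodesic of type $0$ whose interior sits in $\Omega(D_H)\setminus\bd(H)$. By the containment above, $P$ lies in $\Omega(Q,L)$, with its two endpoints on $\bd(H)\subset\inter(\Omega(Q,L))$ and at least one inner vertex $v$ strictly inside $\Omega(Q,L)$.

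The last step, which I expect to be the main obstacle, is converting this configuration into a violation of a previously established property. The natural target is Proposition~\ref{pr:0-lens} (no 0-lens exists). The idea is that the $H$-geodesic $P$ (or, more conveniently, an appropriate subpath of $D_H$ itself, which is a union of shortest pieces by~\refeq{casei}) together with a piece of the $H'$-geodesic $Q$ ought to bound a 0-lens: both candidate subpaths are shortest with respect to $\ell$, they share their two endpoints after a suitable choice of a common vertex on $D_H$ (obtained by following $D_H$ outward from $\bd(H)$ until it meets $Q$, using the containment $\Omega(D_H)\subset\Omega(Q,L)$), and the enclosed region lies in $\Omega(Q,L)\setminus\Omega(D_H)$ which contains no hole (as $H\subset\Omega(D_H)$ and $H',H''\not\subset\Omega(Q,L)\setminus\Omega(D_H)$). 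The delicate point, and where I would spend most of the care, is verifying that $Q$ really does touch $D_H$ in at least two vertices so that the lens can be formed — this should follow from the fact that $Q$ is an $s'$--$t'$ path outside $\inter(\Omega(D_H))$ whose associated Jordan curve $Q\cup L$ properly encloses $\Omega(D_H)$, combined with the shortest-path property of $D_H$'s subpaths and the minimality conventions guaranteed by (C2)--(C5).
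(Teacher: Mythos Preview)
Your approach has a genuine gap at exactly the point you flag as ``delicate'': there is no reason why the $H'$-geodesic $Q$ should meet $D_H$ (or any type-$0$ $H$-geodesic $P$) in even one vertex, let alone two. You correctly establish $\Omega(D_H)\subset\Omega(Q,L)$ and that $Q$ avoids $\inter(\Omega(D_H))$, but this is perfectly compatible with $Q\cap D_H=\emptyset$: the cycle $D_H$ can sit strictly inside the region $\Omega(Q,L)$, with a layer of inner faces separating $D_H$ from $Q$. Your type-$0$ path $P$ has both endpoints on $\bd(H)$ and lies entirely in $\Omega(D_H)$, so it too need not touch $Q$. The phrase ``following $D_H$ outward from $\bd(H)$ until it meets $Q$'' does not make sense, since $D_H$ is a cycle around $H$, not an outward-going path; and none of (C2)--(C5) force the intersection you need. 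So the $0$-lens cannot be formed, and Proposition~\ref{pr:0-lens} is not the right target.

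The paper's argument is quite different and avoids this obstacle. From the nontriviality of $N_H$ it extracts two vertices $u,v$ lying in a common inner face inside $\Omega(D_H)$ that are \emph{not} joined by a directed path in $N_H$. By~\refeq{distHH} any $\Hscr$-geodesic through $u,v$ must be an $H$-geodesic, and by the choice of $u,v$ it must have type~$1$ (since type-$0$ $H$-geodesics are exactly the directed paths in $N_H$). Call such a geodesic $Q$. Now assume for contradiction that $\tau(H')=1$ and take an $H'$-geodesic $Q'$ of type~$1$. Since $Q$ separates $H'$ from $H''$ and $Q'$ separates $H$ from $H''$, the two paths must cross; swapping the crossing pieces produces an $H$-geodesic $\tilde Q$ of type~$0$ still passing through $u$ and $v$. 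This contradicts the choice of $u,v$. The key maneuver is thus an \emph{exchange between two type-$1$ geodesics}, whose crossing is guaranteed topologically, rather than an attempt to intersect a type-$0$ object with $Q$.
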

 \begin{proof}
Let $v$ be a vertex in $ bd(H)-D_H$ (existing since $\bd(H)\ne D_H$). One can
see that $\Omega(D_H)$ contains an inner face $F$ of $G$ and a vertex $u\ne v$
such that
\begin{numitem1} \label{eq:uvbdF}
both $u,v$ belong to $F$ but not connected by a directed path in $N_H$.
  \end{numitem1}

By~(C2), $u$ and $v$ belong to an $\Hscr$-geodesic $Q$. By~\refeq{distHH}, $Q$
is an $H$-geodesic, and~\refeq{uvbdF} implies that $\tau(Q)=1$, i.e., $Q$
separates the holes $H'$ and $H''$, where $\Hscr=\{H,H',H''\}$. One may assume
that the $u$--$v$ part of $Q$ lies in $\Omega(D_H)$.

Now suppose that $\tau(H')=1$ as well. Take an $H'$-geodesic $Q'$ separating
$H$ and $H''$. Then $Q$ and $Q'$ ``cross'' each other; let $x$ and $y$ be the
first and last vertices of $Q'$ occurring in $Q$, respectively. Let $R$ and
$R'$ be the $x$--$y$ parts of $Q$ and $Q'$, respectively; see the picture where
$R,R'$ are drawn in bold.

\vspace{-0cm}
\begin{center}
\includegraphics[scale=0.9]{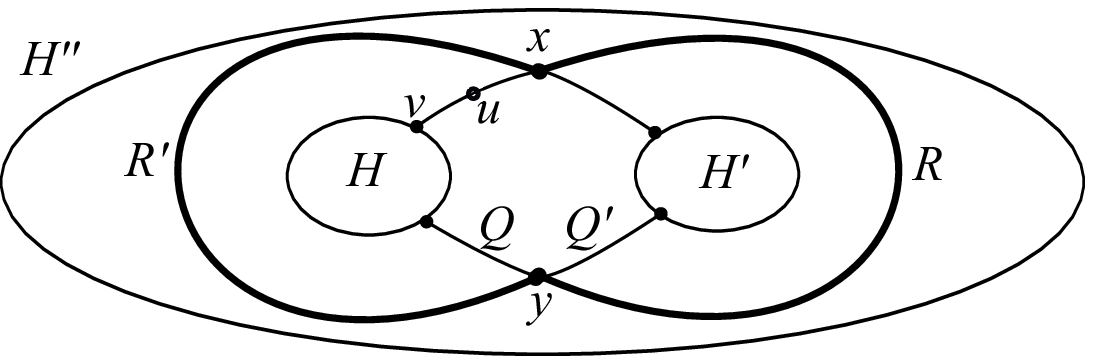}
\end{center}
\vspace{-0cm}

Exchange in $Q,Q'$ the pieces $R,R'$, forming $H$-path $\tilde Q$ and $H'$-path
$\tilde Q'$, respectively. In view of $\ell(\tilde Q)+\ell(\tilde Q')=
\ell(Q)+\ell(Q')$, both $\tilde Q,\tilde Q'$ are shortest. Also $\tilde Q$ does
not separate $H'$ and $H''$ and contains both vertices $v,u$. So $\tilde Q$ is
an $H$-geodesic of type 0 passing $v$ and $u$, which contradicts~\refeq{uvbdF}.
  \end{proof}

This lemma is generalized as follows.

  \begin{lemma} \label{lm:oneHtype1}
~Subject to~\refeq{1edge}, exactly one hole has type 1.
 \end{lemma}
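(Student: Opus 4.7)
The plan is to prove, under \refeq{1edge}, two complementary bounds: (A) at least one hole has type~$1$, and (B) at most one hole has type~$1$.

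For (A), I argue by contradiction, assuming $\tau(H)=0$ for every $H\in\Hscr$. Then Proposition~\ref{pr:Htype0} gives $N_H=\bd(H)$ for each $H$, i.e., every type-$0$ $H$-geodesic lies in $\bd(H)$. Take an inner edge $e=uv$ supplied by \refeq{1edge}; after preprocessing $\ell(e)=d(uv)$, and by (C2) the pair $uv$ is tight, so there is an $\Hscr$-geodesic $P$ through $u,v$, say an $H$-geodesic. Since $\tau(H)=0$, $P$ has type~$0$ and hence $P\subseteq N_H=\bd(H)$, forcing $u,v\in V_H$. Then the single-edge path $e$ is itself a shortest $u$--$v$ $H$-path, hence an $H$-geodesic of type~$0$, so $e\in\Pscr^0_H\subseteq\bd(H)$---contradicting that $e$ is inner. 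The inner-face case of \refeq{1edge} (with $|V_F|\ge 4$) is reduced to this: picking non-$\bd(F)$-adjacent $x,y\in V_F$, (C2) yields tightness; if $x,y$ are also non-adjacent in $G$ the same argument applies, while if $xy$ is an edge outside $\bd(F)$ it is either inner (case~(a)) or on some $\bd(H)$ forcing $V_F\subseteq V_H$, whereupon a diagonal of $F$ supplies an inner edge.

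For (B), suppose $\tau(H)=\tau(H')=1$ for two distinct holes. By Lemma~\ref{lm:nontrivNH}, a nontrivial $N_H$ would force $\tau(H')=0$; hence $N_H$ is trivial, and symmetrically $N_{H'}$ is trivial. Take type-$1$ geodesics $Q$ (of $H$, separating $H',H''$) and $Q'$ (of $H'$, separating $H,H''$); by planarity they cross. Mirror the exchange in the proof of Lemma~\ref{lm:nontrivNH}: let $x,y$ be the first and last vertices of $Q'$ on $Q$ and form the shortest paths $\tilde Q=Q[s,x]\cdot R'\cdot Q[y,t]$ and $\tilde Q'=Q'[s',x]\cdot R\cdot Q'[y,t']$, with $R=Q[x,y]$ and $R'=Q'[x,y]$. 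A symmetric-difference argument shows that the loop $R\cup R'$ must enclose exactly the common third hole $H''$ (the only configuration in which both $\tilde Q$ and $\tilde Q'$ simultaneously fail to separate their intended hole pairs), so both exchanged paths are of type~$0$. Triviality of the necklaces then yields $\tilde Q\subseteq\bd(H)$ and $\tilde Q'\subseteq\bd(H')$; consequently $R'\subseteq\bd(H)$, $R\subseteq\bd(H')$, and $x,y\in V_H\cap V_{H'}$.

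The closing step is to turn this forced configuration into a genuine contradiction. The region bounded by $R\cup R'$ contains $H''$ and has boundary on $\bd(H)\cup\bd(H')$; combined with \refeq{distHH} and the triviality of $N_{H''}$, this restricts $H''$-geodesics to the enclosed annular region. Applying the part-(A) analysis \emph{locally} inside this region---using the inner edge (or big inner face) from \refeq{1edge}, which must supply a tight pair whose $\Hscr$-geodesic via (C2) enters the enclosed region---produces a contradiction with either \refeq{distHH} or with the triviality of $N_{H''}$. I expect this final topological/combinatorial step to be the main obstacle: one must carefully handle the planar configuration of two hole boundaries meeting at precisely the two shared vertices $x,y$, and squeeze the contradiction out of (C1)--(C5) together with \refeq{1edge}, while preserving the requirement that $Q,Q'$ genuinely separate their respective hole pairs.
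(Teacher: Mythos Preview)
Your split into (A) and (B) is reasonable, and the exchange manoeuvre in (B) is the right mechanism. The genuine gap---which you yourself flag as ``the main obstacle''---is the closing step of (B). From the bare configuration $R'\subseteq\bd(H)$, $R\subseteq\bd(H')$, $x,y\in V_H\cap V_{H'}$, with $R\cup R'$ enclosing $H''$, there is no contradiction to extract: nothing in (C1)--(C5) forbids two hole boundaries from meeting in two points with $H''$ trapped between arcs. Your plan to ``apply the part-(A) analysis locally'' cannot work as stated, because the inner edge or large inner face supplied by~\refeq{1edge} need not lie in the enclosed region, and you have no handle on where an $\Hscr$-geodesic through its witness pair goes relative to $R\cup R'$.

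The missing idea is that (A) and (B) must share the \emph{same} witness. The paper does not prove them independently: it first selects $u,v$ in an inner face $F$ with $\{u,v\}\not\subseteq V_{\tilde H}$ for every hole $\tilde H$ (this is what~\refeq{1edge} delivers, and it subsumes both of your sub-cases in~(A); note incidentally that your ``diagonal of $F$ supplies an inner edge'' line does not quite work when no inner edge exists). Any $\Hscr$-geodesic $Q$ through $u,v$ is then forced to be a type-$1$ $H$-geodesic for some $H$ (all necklaces being trivial), which already gives~(A). For~(B), \emph{this same $Q$} is played against a type-$1$ $H'$-geodesic $Q'$. After the exchange one has $\tilde Q\subseteq\bd(H)$ and $\tilde Q'\subseteq\bd(H')$; since $u,v\in Q$ but $\{u,v\}\not\subseteq V_H$ and $\{u,v\}\not\subseteq V_{H'}$, exactly one of $u,v$ lies in $R=Q[x,y]$. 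The cycle formed by $R'$ together with the complement of $R$ in $\bd(H')$ then separates $u$ from $v$---impossible, as $u,v$ lie in a common face. Your version took $Q$ to be an arbitrary type-$1$ $H$-geodesic, discarding exactly the information ($u,v\in Q$, with $\{u,v\}$ in no single hole boundary) that closes the argument.
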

\begin{proof}
~In view of lemma~\ref{lm:nontrivNH}, we may assume that the necklaces of all
holes are trivial. From ~\refeq{1edge} it follows that there are two vertices
$u,v$ contained in an inner face $F$ but not in the boundary of one hole.
By~(C2), for some $H\in\Hscr$, there is an $H$-geodesic $Q$ passing $u,v$.
Since $N_H$ is trivial, $\tau(Q)=\tau(H)=1$. Suppose that there is another hole
$H'$ of type 1. Choose an $H'$-geodesic $Q'$ of type 1. Then (like in the proof
of Lemma~\ref{lm:nontrivNH}) there are vertices $x,y\in V_Q\cap V_{Q'}$ such
that exchanging in $Q,Q'$ the pieces $R:=Q[x,y]$ and $R':=Q'[x,y]$, we obtain
an $H$-geodesic $\tilde Q$ and an $H'$-geodesic $\tilde Q'$, both of type 0.
Then $\tilde Q\subset \bd(H)$ and $\tilde Q'\subset\bd(H')$, and therefore
neither $\tilde Q$ nor $\tilde Q'$ contains both $u,v$, or, equivalently, $R$
contains exactly one of $u,v$. Assuming, w.l.o.g., that $R\cap R'=\{x,y\}$, one
can see that the cycle formed by $R'$ and the complement of $R$ to $\bd(H')$
separates $u$ from $v$. This is impossible since $u,v$ belong to one face.
 \end{proof}

 \begin{lemma} \label{lm:HHpconnect}
For any two holes $H,H'\in\Hscr$, ~$\bd(H)\cap\bd(H')$ is connected (possibly
empty).
 \end{lemma}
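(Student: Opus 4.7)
My plan is to argue by contradiction: suppose $\bd(H)\cap\bd(H')$ has $k\ge 2$ components $C_1,\dots,C_k$. By planarity they appear in consistent cyclic orders on both $\bd(H)$ and $\bd(H')$, so for each $i$ I can let $\alpha_i$ (resp.\ $\alpha_i'$) denote the arc of $\bd(H)$ (resp.\ $\bd(H')$) between $C_i$ and $C_{i+1}$ whose interior is disjoint from $\bd(H)\cap\bd(H')$. Fixing an index $i$, let $u$ be the endpoint of $C_i$ adjacent to $\alpha_i$ and $v$ the endpoint of $C_{i+1}$ adjacent to $\alpha_i$, and take any shortest $u$--$v$ path $M$ in $G$; since $u,v\in V_H\cap V_{H'}$, the path $M$ is both an $H$-geodesic and an $H'$-geodesic.

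The key step is to show, via (C4), that both arcs of $\bd(H)$ between $u$ and $v$ --- the direct arc $\alpha_i$ and the ``long way'' $L^*$ through the remaining components and gaps --- are shortest (and similarly for $\bd(H')$). If $\alpha_i$ were not shortest then $(M,\alpha_i)$ would be excessive, and (C4) would force $\tau(M,\alpha_i)\ge 2$; since $\tau(M,\alpha_i)+\tau(M,L^*)=|\Hscr|-1=2$, this would give $\tau(M,L^*)=0$, and (C4) applied to $(M,L^*)$ would then force $L^*$ to be shortest. But then $M$ (an $H'$-geodesic) together with $L^*$ (an $H$-geodesic) would bound the region $\Omega(M,L^*)$ containing no hole --- a 0-lens contradicting Proposition~\ref{pr:0-lens}. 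A symmetric argument covers $L^*$ and the two arcs of $\bd(H')$, so all four arcs between $u$ and $v$ are shortest of length $d(uv)=\sigma_H/2=\sigma_{H'}/2$; in particular $\ell(\alpha_i)=\sigma_H/2$.

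Applying the same to every $i$ gives $\ell(\alpha_i)=\sigma_H/2$ for all $i$. Combined with $\sum_i\ell(\alpha_i)+\sum_i\ell(C_i)=\sigma_H$ (total length of $\bd(H)$), this forces $k\sigma_H/2\le\sigma_H$, hence $k\le 2$; and when $k=2$ we additionally get $\ell(C_1)+\ell(C_2)=0$, so by (OP1) the components $C_1,C_2$ must be the single vertices $\{u\},\{v\}$. In this final configuration $\bd(H)\cup\bd(H')$ partitions the sphere into four regions --- $H,H'$ and two ``gap'' regions bounded respectively by the simple cycles $\alpha_1\cup\alpha_1'$ and $\alpha_2\cup\alpha_2'$. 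The third hole $H''$ lies in exactly one of these gap regions; the other is hole-free, and its bounding cycle consists of two shortest $u$--$v$ arcs (one an $H$-geodesic, one an $H'$-geodesic) meeting only at $\{u,v\}$ --- a 0-lens, again contradicting Proposition~\ref{pr:0-lens}.

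The main delicate point is the simplicity of the cycle $M\cup L^*$ invoked in the second step: if $M$ and $L^*$ share an interior vertex (which can happen when $L^*$ traverses other components of $\bd(H)\cap\bd(H')$ that $M$ also visits), one must restrict to a sub-lens between two consecutive common vertices and verify that those shared vertices lie in $V_H\cap V_{H'}$ (so that the restricted subpath of $M$ is still a genuine $H'$-geodesic). The standard way to dispose of this is to choose $M$ so that at each crossing with $L^*$ the common vertex already lies in $\bd(H)\cap\bd(H')$, by rerouting segments of $M$ along pieces of $L^*$ between shared vertices without changing the total length.
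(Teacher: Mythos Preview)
Your route via (C4) and Proposition~\ref{pr:0-lens} is genuinely different from the paper's: the paper picks one hole-free ``gap'' region $\Omega$ bounded by arcs $L\subset\bd(H)$ and $L'\subset\bd(H')$, then uses~(C2) and the necklace machinery of Section~\SEC{neclac} to argue that the subgraph of $G$ in $\Omega$ lies in $N_H\cup N_{H'}$, contradicting~\refeq{barrier}. Your approach avoids necklaces, which is nice, but as written it has a gap.

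The problem is in the ``key step''. Suppose $\alpha_i$ is not shortest; you conclude $\tau(M,L^*)=0$ and then assert that $M$ (as an $H'$-geodesic) and $L^*$ (as an $H$-geodesic) bound a 0-lens. But you took $M$ to be \emph{any} shortest $u$--$v$ path, and $L^*$ is one such path; nothing rules out $M=L^*$, in which case there is no lens at all. Your final paragraph treats the case where $M$ and $L^*$ share interior vertices, but the proposed cure --- rerouting pieces of $M$ along $L^*$ --- makes this worse: it can collapse $M$ onto $L^*$ completely. (Incidentally, your stated requirement that the sub-lens endpoints lie in $V_H\cap V_{H'}$ is unnecessary: in the definition of a lens in Section~\SEC{0-lenses}, $Q$ and $Q'$ are merely \emph{subpaths} of an $H$-geodesic and an $H'$-geodesic; their common endpoints $x,y$ need not be terminals.)

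A clean repair is to choose $M$ from $\bd(H')$. By~(C4) for $H'$, one of $\alpha_i'$ and its complementary arc $L^{*'}$ in $\bd(H')$ is shortest; take that one as $M$. Now restrict to a hole-free gap $R_j$ (such $j$ exists since $k\ge 2$ and only $H''$ lies outside $H\cup H'$). If $M=\alpha_j'$ then $\Omega(M,\alpha_j)=R_j$ has type $0$; if $M=L^{*'}$ then $\Omega(M,\alpha_j)=R_j\cup H'$ has type $1$. Either way~(C4) forces $\alpha_j$ to be shortest, and symmetrically $\alpha_j'$ is shortest; then $R_j$ itself, bounded by the $H$-geodesic $\alpha_j'$ and the $H'$-geodesic $\alpha_j$, is a non-degenerate 0-lens, contradicting Proposition~\ref{pr:0-lens}. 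This finishes the proof in one shot and makes your detour through $\ell(\alpha_i)=\sigma_H/2$ and the counting $k\sigma_H/2\le\sigma_H$ unnecessary.
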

 \begin{proof}
Suppose this is not so. Then there are two paths $L\subset\bd(H)$ and
$L'\subset\bd(H')$ that have the same ends, $x,y$ say, and no common
intermediate vertices, and one of the two regions of the plane bounded by
$L\cup L$, ~$\Omega$ say, contains at least one inner face and no hole (in view
of $|\Hscr|=3$).

Each pair of vertices in a face within $\Omega$ belongs to an $\Hscr$-geodesic
lying in $\Omega$ and having both ends in one of $L,L'$. Considering such
geodesics, one can conclude that the subgraph of $G$ lying in $\Omega$ is
contained in $N_H\cup N_{H'}$. This easily implies $\ell(L)=\ell(L')$ and
$L,L'\subset N_H\cap N_{H'}$, contradicting~\refeq{barrier}.
  \end{proof}

Consider the auxiliary graph $\Gamma$ whose vertices are the inner faces of $G$
and whose edges are the pairs of inner faces sharing an edge. For a component
$\Gamma'$ of $\Gamma$, let $\Omega_{\Gamma'}$ be the union of faces that are
the vertices of $\Gamma'$. Lemma~\ref{lm:HHpconnect} implies that $\Gamma$ has
at most two components, and for each component $\Gamma'$, the region
$\Omega_{\Gamma'}$ is surrounded by three paths in the boundaries of holes. One
more important fact is as follows.
 \begin{lemma} \label{lm:Gammap}
Let an inner face $F$ share an edge $e$ with the hole $H$ of type 1. Suppose
that at least one is true: $F$ has an inner edge, or $|V_F|\ge 4$
(cf.~\refeq{1edge}). Then there is an endvertex $x$ of $e$ and a vertex $z\in
V_F$ such that both $x,z$ belong to an $H$-geodesic $P$ of type 1 and satisfy
$d(xz)+d(xy)>d(yz)$, where $y$ is the other endvertex of $e$.
  \end{lemma}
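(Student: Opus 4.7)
The plan is to exhibit a vertex $z\in V_F\setminus\{x,y\}$ and a labelling $x',y'$ of the two endvertices of $e$ such that (a) some type-1 $H$-geodesic passes through both $x'$ and $z$, and (b) $\eps(x'|y'z):=d(x'y')+d(x'z)-d(y'z)>0$. Property (b) is precisely the strict inequality in the lemma, and is almost automatic once $z$ is fixed: the elementary identity
\[
\eps(x|yz)+\eps(y|xz)=2d(xy)>0
\]
(positive because $x\ne y$ and zero-length edges are contracted by~(OP1)) guarantees that at least one of the two endvertex-labellings of $e$ satisfies~(b). The substantive step is to ensure (a) holds with the same labelling.

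By Lemma~\ref{lm:oneHtype1} applied under~\refeq{1edge}, $H$ is the unique hole of type~1, and by Proposition~\ref{pr:Htype0} the remaining holes $H',H''$ have trivial necklaces $N_{H'}=\bd(H')$ and $N_{H''}=\bd(H'')$. Consequently, every $H'$- or $H''$-geodesic lies in $\bd(H')$ or $\bd(H'')$, and every type-0 $H$-geodesic lies in $N_H$. Thus, any tight pair $\{u,v\}$ furnished by~(C2) with $\{u,v\}\not\subseteq V_{H'}$, $\{u,v\}\not\subseteq V_{H''}$, and with $u,v$ not simultaneously on a common directed $s$--$t$ path of $N_H$, is forced to be connected by a type-1 $H$-geodesic. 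It therefore suffices to find $z\in V_F\setminus\{x,y\}$ and an endvertex $x'\in\{x,y\}$ such that these three exclusion conditions hold for $\{x',z\}$ and moreover $\eps(x'|y'z)>0$.

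The existence of such $z$ is extracted from the hypothesis that $F$ has an inner edge or $|V_F|\ge4$, using Lemma~\ref{lm:HHpconnect} (which forces $\bd(H)\cap\bd(H')$ and $\bd(H)\cap\bd(H'')$ to meet $\bd(F)$ in at most a single arc) and~\refeq{barrier} (which confines $H'$- and $H''$-geodesics away from $\inter(\Omega(D_H))$, hence away from most of $F$). When $|V_F|\ge 4$, a pigeonhole provides enough candidates; when $|V_F|=3$ with a single inner edge $xz$ or $yz$, the third vertex $z$ is by construction not in any $\bd(H_i)$ forbidden configuration. I expect the main obstacle to be the case analysis matching the endvertex $x'$ forced by~(b) against the exclusion conditions needed for~(a); in the borderline configurations one must either swap the roles of $x$ and $y$, or replace $z$ by another candidate in $V_F$ supplied by the combinatorial surplus, and verify that the resulting pair still meets both~(a) and~(b).
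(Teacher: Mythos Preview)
Your plan correctly isolates the two requirements~(a) and~(b), and your reduction of~(a) to the three exclusion conditions (the pair not in $V_{H'}$, not in $V_{H''}$, not on a common directed path of $N_H$) is exactly right. However, the ``matching problem'' you flag as the main obstacle is illusory. The paper shows that once $x',z$ satisfy the exclusion conditions, the strict inequality $\eps(x'|y'z)>0$ is \emph{automatic}: if it failed, one could splice the edge $e=y'x'$ onto a shortest $x'$--$z$ path inside any $\Hscr$-geodesic $Q$ through $y',z$, producing a geodesic $Q'$ through $y',x',z$; by the exclusion conditions on $x',z$, $Q'$ is a type-1 $H$-geodesic, and by~\refeq{diametr} every such geodesic joins antipodal terminals of $\bd(H)$. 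Then both $y'$ and $x'$ (via the subpath $Q'[x',\cdot]$, which still contains $z$) are antipodal to the same far endpoint, contradicting $\ell(e)>0$. So your identity $\eps(x|yz)+\eps(y|xz)=2d(xy)$ and the swap strategy are unnecessary; condition~(a) already forces~(b).

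What remains is to actually produce $x',z$ satisfying the exclusion conditions, and here your sketch is too thin. The paper does this by a concrete case split on whether $F\subset\Omega(D_H)$. If not, one takes $z=x_i$ to be the first vertex of $\bd(F)$ outside $\Omega(D_H)$ (so no type-0 $H$-geodesic reaches it) and picks $x'\in\{x_0,x_k\}$ to avoid $V_{H'},V_{H''}$, with a further tweak exploiting $|V_F|\ge4$ when both choices are blocked. If $F\subset\Omega(D_H)$, one uses the face of $N_H$ containing $e$: the non-dominating condition~(C3) forces one endvertex $x'$ of $e$ off $D_H$, and $z$ is taken on the opposite boundary path of that $N_H$-face, so $x',z$ cannot lie on a common directed path. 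Your appeal to pigeonhole and to Lemma~\ref{lm:HHpconnect}/\refeq{barrier} does not substitute for this; in particular, the case $F\subset\Omega(D_H)$ genuinely needs the $N_H$-face argument rather than connectivity of hole intersections.
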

 \begin{proof}
Let $x_0,x_1,\ldots,x_k$ be the sequence of vertices in $\bd(F)$ and
$e=x_0x_k$. One may assume that $e$ is directed from $x_0$ to $x_k$ in $N_H$.
Consider two cases.
\smallskip

\emph{Case 1}: ~$F\not\subset \Omega(D_H)$. Then there exists $x_i$ which is
not in $\Omega(D_H)$. (Since $V_F\subset \Omega(D_H)$ together with
$F\not\subset \Omega(D_H)$ is possible only if $V_F\subset D_H$, and therefore
for some $j$, the sequence $x_jx_{j-1}\cdots x_0x_kx_{k-1}\cdots x_{j+1}$ forms
a directed path $Q$ in $D_H$. But then $\ell(Q)=\pi(x_{j+1})-\pi(x_j)$, whence
the edge $x_jx_{j+1}$ is dominating, contrary to~(C3).) Let $i$ be minimal
subject to $x_i\notin\Omega(D_H)$. If there is no hole $H'$ such that
$x_0,x_i\in V_{H'}$, then we assign $x:=x_0,y:=x_k,z:=x_i$.

Suppose that $x_0,x_i\in V_{H'}$ for some $H'\in\Hscr$. Then $H'\ne H$ and the
path $x_0x_1\cdots x_i$ lies in $\bd(H')$ (since $\tau(H')=0$, by
Lemma~\ref{lm:oneHtype1}). This implies that $i=1$ (taking into account that
$x_{i-1}$ is in $D_H$ and $x_0\in V_H\cap V_{H'}$). If there is no hole $H''$
such that $x_1,x_k\in V_{H''}$, then we assign $x:=x_k,y:=x_0,z:=x_1$.

Next suppose that $x_0,x_1\in V_{H'}$ and $x_1,x_k\in V_{H''}$, where
$\Hscr=\{H,H',H''\}$. Then the path $x_1x_2\cdots x_k$ lies in $\bd(H'')$. So
$\bd(F)$ has no inner edges, implying $|V_F|=k\ge 4$ (by the hypotheses of the
lemma). It follows that the pair $x_0,x_{k-1}$ is contained in none of the
holes, and we assign $x:=x_0,y:=x_k,z:=x_{k-1}$.

We assert that in all cases an $\Hscr$-geodesic $P$ passing $x,z$ as above
(existing by~(C2)) is as required. Indeed, $P$ is an $H$-geodesic (since any
$H'$-geodesic for $H'\ne H$ lies in $\bd(H')$, but $\{x,z\}\not\subset
V_{H'}$). Also $\tau(P)=0$ is impossible, in view of $z\notin\Omega(D_H)$.

It remains to show that the inequality $d(xz)+d(xy)\ge d(yz)$ is strict.
Supposing that it holds with equality, take an $\Hscr$-geodesic $Q$ passing
$y,z$. Then the path $Q'$ obtained from $Q$ by replacing its $y$--$z$ part by
the concatenation of the edge $e=yx$ and a shortest $x$--$z$ path is again an
$\Hscr$-geodesic. Moreover, $Q'$ is an $H$-geodesic of type 1 (by reasonings as
above and the choice of $x,z$). Then $Q'$ connects antipodal terminals in
$\bd(F)$, one of which is $y$; let $s$ be the other end of $Q'$. But the part
of $Q'$ with ends $x$ and $s$ is also an $H$-geodesic of type 1, and therefore
$x$ must be antipodal to $s$ as well; a contradiction.
\smallskip

\emph{Case 2}: ~$F\subset \Omega(D_H)$. Let $F'$ be the face of $N_H$
containing $e$ and different from $H$; then $F\subseteq F'$ (possibly $F=F'$).
Let $\bd(F')$ be formed by two directed $x'$--$y'$ paths $Q$ and $R$, and let
$e$ belong to $Q$. Since $e$ is non-dominating, at least one of $x'\ne x_0$ and
$y'\ne x_k$ is true. Let for definiteness $x'\ne x_0$. This implies $x_0\notin
D_H$. We assign $x:=x_0$, $y:=x_k$, and assign $z$ to be an arbitrary vertex
$x_i\in V_F$ contained in $R-\{x',y'\}$. Let $P$ be an $\Hscr$-geodesic passing
$x,z$. Then $P$ is an $H$-geodesic (in view of~\refeq{barrier} and
$x\in\Omega(D_H)-D_H$). Also $\tau(P)=1$ (since the pair $x,z$ does not belong
to a directed path in $N_H$). The required inequality $d(xz)+d(xy)>d(yz)$ is
proved as in Case~1.
\end{proof}

Consider $F,e,x,y,z,P$ as in this lemma and let $s$ be the endvertex of $P$
different from $x$. It is convenient for us to add to $G$ (for a while) extra
edges $e'=xz$ and $e''=yz$ with the lengths $\ell(e'):=d(xz)$ and
$\ell(e''):=d(yz)$, placing them in the face $F$ (unless such edges already
exist); this does not affect the problem. Accordingly replace in $P$ the
$x$--$z$ part by the edge $e'$. The updated path $P$ divides
$\Rset^2-\inter(H)$ into two closed regions $\Omega'$ and $\Omega''$ containing
$H'$ and $H''$, respectively. Let for definiteness $y$ lies in $\Omega''$; then
the face $T$ bounded by the edges $e,e',e''$ lies in $\Omega''$ as well; see
the picture.

\vspace{-0cm}
\begin{center}
\includegraphics[scale=0.9]{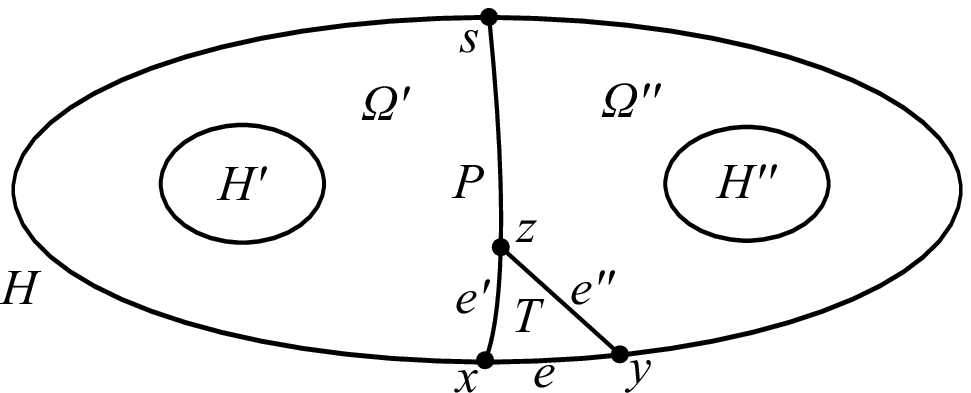}
\end{center}
\vspace{-0cm}

Let $P'$ be the $y$--$s$ path concatenating $e''$ and $P[z,s]$. We wish to
devise a collection of reducible cuts containing both edges $e,e'$ and
traversing $\Omega'$. To this aim, we extract from $G$ the subgraph
$G'=(V',E')$ lying in $\Omega'\cup T$ and, using the algorithm of~\cite{kar90},
solve the auxiliary two-hole PMP with $G'$, $\ell\rest{E'}$ and
$\Hscr':=\{H',\tilde H\}$, where $\tilde H$ is the face of $G'$ containing $H$.
It finds a packing $\Cscr$ of cuts $\delta X$ in $G'$ with integer weights
$\lambda(X)>0$ that realize the distances on $\Pi_{\Hscr'}$.

Let $\Cscr_1,\Cscr_2,\Cscr_3$ be the collections of those cuts $\delta
X\in\Cscr$ that contain $\{e,e'\}$, $\{e,e''\}$ and $\{e',e''\}$, respectively,
and define $a_i:=\sum(\lambda(X)\colon \delta X\in\Cscr_i)$, $i=1,2,3$. The
edges of $T$ must be saturated by $(\Cscr,\lambda)$, i.e., we have the
equalities
  $$
 d(xy)=a_1+a_2,\quad d(xz)=a_1+a_3,\quad d(yz)=a_2+a_3.
  $$
Hence $2a_1=d(xy)+d(xz)-d(yz)>0$, in view of Lemma~\ref{lm:Gammap}. For each
$\delta X\in\Cscr_1$, we may assume that $x\in X$. Since $\delta X$ can meet
any $\tilde H$-geodesic at most once, $e'$ is the unique common edge of $\delta
X$ and $P$. Then $X$ does not meet $P'$ and therefore $\delta X$ is a cut in
the whole $G$.
 \begin{lemma} \label{lm:C1lambda1}
Let $\ell':=\ell-\sum(\lambda(X)\chi^{\delta X}\colon \delta X\in\Cscr_1)$ and
$\lambda_1:=\lambda\rest{\Cscr_1}$. Then $(\Cscr_1,\lambda_1)$ is reducible for
$(G,\ell,\Hscr)$, i.e., any $p$--$q$ path $Q$ in $G$ with $pq\in\Pi_{\Hscr}$
satisfies~\refeq{QC1}.
  \end{lemma}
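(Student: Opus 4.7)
The argument parallels Lemmas~\ref{lm:Cpreduc} and~\ref{lm:Cscr1}, though some extra care is needed because $X \cap V_P = \{x\} \ne \emptyset$ (unlike in the template). From the discussion preceding the lemma, each $\delta X \in \Cscr_1$ (taken with $x \in X$) satisfies $X \cap V_{P'} = \emptyset$, so $X \subseteq V_{\Omega'} \setminus V_{P[z,s]}$, $X \cap V_P = \{x\}$, and $\delta_G X = \delta_{G'} X$. A preliminary point is that $P$ is $\ell'$-shortest in $G'$ between $x$ and $s$: it is $\ell$-shortest in $G$ (hence in $G'$), and since $(\Cscr,\lambda)$ solves the auxiliary two-hole PMP on $(G', \ell|_{E'}, \Hscr')$ exactly, distances in $G'$ between vertices of $\bd(\tilde H)$ are preserved upon passing to $\ell'$.

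Given a $p$--$q$ path $Q$ in $G$ with $pq \in \Pi_\Hscr$, split on whether $pq \in \Pi_{\Hscr'}$. If $pq \in \Pi_{\Hscr'}$ (i.e., $p,q \in V_{H'}$ or $p,q \in V_{\tilde H}$), iteratively replace each maximal subpath of $Q$ lying in $G - G'$ with both endvertices in $V_P$ by the corresponding segment of $P$; the $\ell'$-shortness of $P$ keeps $\ell'(Q)$ non-increasing, and the resulting path $Q^* \subseteq G'$ satisfies $\ell'(Q^*) \le \ell'(Q)$. Now reducibility of $(\Cscr_1,\lambda_1)$ for $(G', \ell|_{E'}, \Hscr')$ (inherited as a sub-packing of $(\Cscr,\lambda)$) gives~\refeq{QC1} for $Q^*$ and hence for $Q$, once we note $|\{pq\}\cap\rho_\Hscr X|=|\{pq\}\cap\rho_{\Hscr'} X|$ for such pairs and $\delta X \in \Cscr_1$.

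Otherwise $pq \in \Pi_\Hscr \setminus \Pi_{\Hscr'}$, i.e., $pq \in \Pi_{H''}$ or $pq \in \Pi_H$ with at least one endvertex outside $V_{\tilde H}$. Analogous exchanges (pushing subpaths of $Q$ out of $\Omega'$ along $P$) yield a $p$--$q$ path $\tilde Q$ with $\ell'(\tilde Q) \le \ell'(Q)$ that is disjoint from $V_{\Omega'} \setminus V_P$. Then any edge of $\delta X$ ($X \in \Cscr_1$) traversed by $\tilde Q$ must be incident to the unique $V_P$-vertex $x$ in $X$, and a parity argument (a simple path crosses a cut an odd number of times iff its endpoints are separated by it), combined with $\ell(\tilde Q) \ge d(pq)$, yields~\refeq{QC1}.

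The main obstacle is the last step: because $x \in X$, $\tilde Q$ may traverse $\delta X$ through edges incident to $x$ (notably $e'$ and $e$), and one must match these crossings with the separation indicator $|\{pq\} \cap \rho_\Hscr X|$ so that $\ell'(\tilde Q)$ is correctly lower-bounded. This is handled by choosing $\tilde Q$ minimally under the exchange operations and invoking the cut--path parity relation, after which $\ell'(\tilde Q) \ge d(pq)-\sum(\lambda(X)|\{pq\}\cap\rho_\Hscr X|\colon \delta X\in\Cscr_1)$ follows, completing the proof.
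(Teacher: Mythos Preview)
Your overall strategy mirrors the paper's, but there is a genuine gap in your Case~2. Consider $pq\in\Pi_H$ with $p$ on the $\Omega'$-arc $L$ of $\bd(H)$ (so $p\in V_{\tilde H}$) and $q$ on the $\Omega''$-arc $L^c$ with $q\notin V_{\tilde H}$. This pair falls into your Case~2, yet your plan to ``push subpaths of $Q$ out of $\Omega'$ along $P$'' cannot produce a path disjoint from $V_{\Omega'}\setminus V_P$: the endpoint $p$ itself lies in $V_{\Omega'}\setminus V_P$. So after all exchanges you are left with a path of the form $A\cdot B$ where $A$ is a $p$--$v$ path in $\Omega'$ with $v\in V_P$ and $B$ is a $v$--$q$ path in $\Omega''$.

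The real difficulty is the subcase $v=x$, and here your ``parity argument'' is not enough. Every cut $\delta X\in\Cscr_1$ contains both $e$ and $e'$, so $B$ (which must begin with $e$ when $v=x$) loses the full amount $a_1$ in passing from $\ell$ to $\ell'$, while $A$ loses only $a:=\sum(\lambda(X):\delta X\in\Cscr_1,\; p\in X)$. Parity only tells you the number of crossings of $\delta X$ by $A\cdot B$ has the right residue; it does not control the \emph{excess} $a_1-a$ of crossings over separations. What is needed is the quantitative inequality
\[
d(px)+d(xq)\ \ge\ d(pq)+2a,
\]
and this is exactly what the paper establishes by exploiting the structure of the packing $(\Cscr,\lambda)$: each cut in $\Cscr_1$ meets the cycle $P\cup L$ in $e'$ and in one further edge, and those cuts with $p\notin X$ must place that second edge on the $p$--$s$ subarc $R\subset L$, forcing $\ell(R)=a_1-a$ and hence $\ell(A)=\sigma_H/2-a_1+a$; combined with $\ell(B)\ge a_1$ this gives $\ell(A\cdot B)\ge\sigma_H/2+a$, whence $d(pq)\le\sigma_H/2-a$ and the displayed inequality. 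Your sketch does not supply this step, and without it \refeq{QC1} does not follow in this subcase.
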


 \begin{proof}
We denote by $d'$ the distance in $(G',\ell'\rest{E'}$). For $Q$ as above, let
$R_1,\ldots, R_k$ be the components of $Q\cap P$, occurring in this order in
$Q$. We use induction on $k$. If $Q$ is entirely contained in one of
$\Omega',\Omega''$ (in particular, if $k=0$), then~\refeq{QC1} is easy (taking
into account that $(\Cscr_1,\lambda_1)$ is reducible for $(G',\ell',\Hscr')$,
whence $d'(e)=\ell'(e)=a_2$, $d'(e')=\ell'(e')=a_3$, $d'(e'')=d(e'')=a_2+a_3$).

In case $k\ge 2$, choose a vertex $u$ in $R_1$ and a vertex $v$ in $R_2$, and
let $\tilde Q:=Q[u,v]$ and $\tilde P:=P[u,v]$. The reducibility of
$(\Cscr_1,\lambda_1)$ for $(G',\ell\rest{E'},\Hscr')$ implies
$d'(uv)=\ell'(\tilde P)$. In its turn, $\ell(\tilde Q)\ge d'(u,v)$ when $\tilde
Q$ lies in $\Omega'$. And when $\tilde Q$ lies in $\Omega''$, we have
$\ell'(\tilde Q)=\ell(\tilde Q)\ge d(uv)$ if $x\ne u,v$, and $\ell'(\tilde
Q)=\ell(\tilde Q)-a_1\ge d'(uv)$ otherwise (since in the latter case $\tilde P$
contains the edge $e'$ and $\tilde Q$ must contain either $e$ or $e'$). Hence
$\ell'(\tilde Q)\ge \ell'(\tilde P)$ always hold, and~\refeq{QC1} follows by
induction (by replacing $\tilde Q$ by $\tilde P$ in $Q$).

It remains to consider the situation when $k=1$ and $Q$ meets both $\Omega'-P$
and $\Omega''-P$. Let $v\in V_Q\cap V_P$, ~$Q':=Q[p,v]$ and $Q'':=Q[v,q]$. One
may assume that $Q'\subset \Omega'$ (and $Q''\subset \Omega''$); then $p$ is in
$\Omega'-P$ and $q$ is in $\Omega''-P$. This implies $pq\in\Pi_H$.

Take in $(G,\ell)$ a shortest $p$--$v$ path $A$ and a shortest $v$--$q$ path
$B$. One may assume that $A\subset \Omega'$ and $B\subset \Omega''$. Since
$pv\in\Pi_{\tilde H}$, ~$A$ is shortest in $(G',\ell'\rest{E'})$, whence
$\ell'(Q')\ge\ell'(A)$. Also one can see that $\ell'(Q'')\ge \ell'(B)$
(considering both cases $v=x$ and $v\ne x$). So we can replace $Q$ by the
concatenation $L$ of $A$ and $B$. Then $\ell(L)=d(pv)+d(vq)\ge d(pq)$. Since
each cut in $\Cscr_1$ meets $A$ at most once,
  \begin{equation} \label{eq:ellA}
 \ell'(A)=d(pv)-a,
  \end{equation}
where $a:=\sum(\lambda(X)\,|\{pv\}\cap \rho_{\Hscr'} X|\,\colon \delta X\in
\Cscr_1)$.

If $v\ne x$ then, obviously, no cut in $\Cscr_1$ meets $B$, whence
$\ell(B)=\ell(B)=d(vq)$ and~\refeq{ellA} implies~\refeq{QC1} (with $L$ in place
of $Q$). And if $v=x$ then one may assume that both $A,B$ lie in $\bd(H)$. Then
$B$ (and therefore $L$) contains the edge $e$, whence
  \begin{equation} \label{eq:ellB}
 \ell'(B)=d(xq)-a_1.
  \end{equation}

Note that~\refeq{ellA} and~\refeq{ellB} immediately imply~\refeq{QC1} if $a=0$.
So assume that $a>0$. Each cut $\delta X\in\Cscr_1$ contributing to $a$
contains the edge $e'$ and one edge, $\hat e$ say,  of $A$, whereas each cut
non-contributing to $a$ contains $e'$ and one edge, $\tilde e$ say, in the
corresponding $p$--$s$ path $R$ in $\bd(\tilde H)$ (here $\hat e,\tilde e$ are
''opposite'' (in a sense) to $e'$ in the cycle $P\cup A\cup R$). The relation
$\ell(P)=\ell(A)+\ell(R)=\sigma_H/2$ implies that the latter cuts use the
entire $\ell$-length of $R$. Then $\ell(R)=a_1-a$, and we obtain
$\ell(A)=\sigma_H/2-a_1+a$. This together with $\ell(B)\ge \ell(e)\ge a_1$
gives
  $$
  \ell(L)=\ell(A)+\ell(B)\ge\sigma_H/2+a.
  $$
Then the $p$--$q$ path $L'\in \Lscr_H(pq)$ different from $L$ satisfies
$\ell(L')=\sigma_H-\ell(L)\le\sigma_H/2-a$. This implies $d(pq)=\ell(L')\le
\sigma_H/2-a$ and $d(px)+d(xq)=\ell(L)\ge d(pq)+2a$. Now adding~\refeq{ellA}
and~\refeq{ellB}, we have
  \begin{equation} \label{eq:ellpL}
 \ell'(L)=\ell'(A)+\ell'(B)=(d(px)-a)+(d(xq)-a_1)\ge d(pq)+a-a_1.
  \end{equation}

But $a_1-a$ is equal to the sum of values $\lambda(X)$ over the cuts $\delta
X\in\Cscr_1$ separating $p$ and $q$. So~\refeq{ellpL} implies the required
relation~\refeq{QC1}.
 \end{proof}

\noindent \textbf{Reduction III: Implementation and convergency.} We refer to a
natural procedure behind Lemmas~\ref{lm:Gammap} and~\ref{lm:C1lambda1} as
\emph{Reduction III}. It scans all $F,e$ as in Lemma~\ref{lm:Gammap} and, at a
current iteration, finds corresponding $x,y,z,P$ for $F,e$, compute
$(\Cscr_1,\lambda_1)$ and reduce $\ell$ to $\ell'$ as in
Lemma~\ref{lm:C1lambda1}, after which the extra edges $e',e''$ (if exist) are
deleted. Note that
\begin{numitem1} \label{eq:ddp}
~$d(yz)<d(xy)+d(xz)$ turns into $d'(yz)=d'(xy)+d'(xz)$, where
$d':=d_{G,\ell'}$.
  \end{numitem1}
Then we update $\ell:=\ell'$, and so on until $F,e$ as in Lemma~\ref{lm:Gammap}
no longer exist.

The process finishes in $O(|V|^3)$ iterations. To see this, consider a current
iteration and use notation as above. Assume that $\ell'(e)=a_2>0$ (otherwise
$e$ is contracted). Take an $\Hscr$-geodesic $C$ in $(G,\ell)$ passing $y,z$
and let $C'$ be formed from $C$ by replacing $C[y,z]$ by the concatenation of
$e$ and an $\ell$-shortest $x$--$z$ path. By~\refeq{ddp}, $C'$ is
$\ell'$-shortest but not $\ell$-shortest.

Next we argue as follows (cf. the proof of Lemma~\ref{lm:Gammap}). If $C'$ is
an $H'$-geodesic for $\ell'$, then the trivial necklace for $(H',\ell)$
captures $C'$ and becomes nontrivial (since $x\notin V_{H'}$). When $C'$ is an
$H''$-geodesic (for $\ell'$), the behavior is similar. Now let $C'$ be an
$H$-geodesic for $\ell'$. Then $\tau(C')=0$ (for otherwise $C'$ connects
antipodal terminals $y$ and $s$ in $\bd(H)$, and similarly for its part
$C'[x,s]$; this is impossible since $\ell'(e)=a_2>0$). In this case, $N_H$
captures at least one new edge (in view of~\refeq{ddp}).

Thus, in all cases at least one of $N_H,N_{H'},N_{H''}$ grows. This is possible
only if for some $v\in V$ and $st\in\Pi_{\Hscr}$, the excess $\eps(v|st)$
changes from a positive value to zero.

This yields the desired complexity and completes the proof of
Proposition~\ref{pr:inner_edges}. Moreover, we have shown that upon termination
of Reduction~III,
  \begin{numitem1} \label{eq:two-inner-faces}
$G$ has at most two inner faces, and each inner face $F$ has exactly three
edges and shares one edge with each hole.
  \end{numitem1}

%-------------------------- SEC. 7

\section{Final reductions and the proof of Theorem~\ref{tm:H3alg}} \label{sec:final}

To finish the proof of the main theorem, it remains to consider
$G,\ell,\Hscr=\{H_1,H_2,H_3\}$ satisfying~\refeq{two-inner-faces}. An example
with two inner faces $F,F'$ is illustrated in the left fragment of the picture;
here $E_F=\{e_i=v_{i-1}v_{i+1}\, \colon i=1,2,3\}$ and $F$ shares the edge
$e_i$ with $H_i$ (taking indices modulo 3).

\vspace{-0cm}
\begin{center}
\includegraphics[scale=1.0]{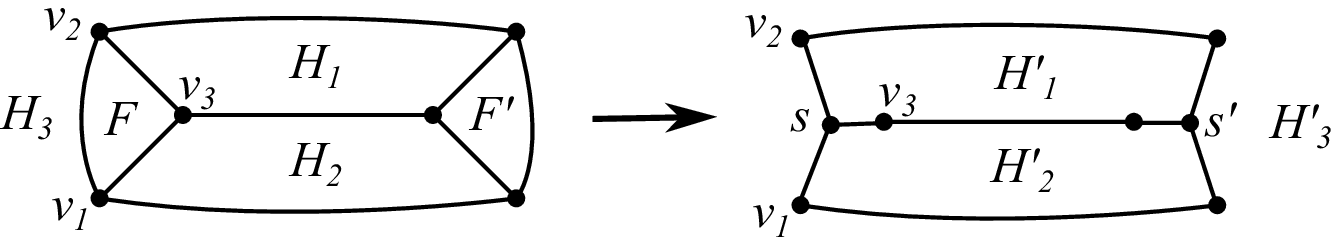}
\end{center}
\vspace{-0cm}

Let us replace the ``triangle'' $F$  by a star. More precisely, insert in
$\inter(F)$ one extra vertex $s$, delete the edges of $\bd(F)$ and connect $s$
with each $v_i$ by an edge with the length $a_i$ such that
   $$
 a_{i-1}+a_{i+1}=d(v_{i-1}v_{i+1})\;\; (=\ell(e_i)), \quad i=1,2,3;
   $$
these lengths are unique, nonnegative and integral. Act similarly for the other
inner face $F'$ (if exists). This transformation results in a graph with three
holes and without inner faces; see the right fragment of the above picture.
Also the lengths of edges in this graph are cyclically even and preserve the
distances between the old vertices. Then a solution for the new triple (keeping
notation $(G,\ell,\Hscr)$ for it) determines a solution for the old one,
yielding Proposition~\ref{pr:without_faces}.

Thus, we come to the case when the current graph $G$ is formed by three openly
disjoint paths $P_1,P_2,P_3$ connecting two vertices $s$ and $s'$, and each
hole $H_i$ is bounded by $P_{i-1}\cup P_{i+1}$. Assume that $\ell(P_1)\le
\ell(P_2)\le\ell(P_3)$. Note that if $\ell(P_1)<\ell(P_3)$ then $P_1$ is an
$H_1$-geodesic of type 1 and the pair $(P_1,P_3)$ is excessive w.r.t. $H_1$
(causing Reduction~I from Sect.~\ref{sec:red_cuts}). So we may assume that
$P_1,P_2,P_3$ have the same length $b$.

It is convenient to slightly modify $(G,\ell)$ to make it ``path-invariant''
and ``mirror-reflective''. More precisely, subdividing some edges of $G$ (which
adds at most $5|V|$ new vertices), we can update $G$ so that for any vertex $x$
of a path $P_i$, each other path $P_j$ has vertex $y$ with $d(sy)=d(sx)$ and
vertex $z$ with $d(sz)=b-d(sx)$. Let $s=x^i_0,x^i_1,\ldots, x^i_k=s'$ be the
sequence of vertices in $P_i$, $i=1,2,3$, and define
  $$
  \lambda_r:=\ell(x^i_{r-1}x^i_r)\;\;(=d(sx^i_r)-d(sx^i_{r-1})), \quad
r=1,\ldots, k,
  $$
which does not depend on $i$ and satisfies $\lambda_r=\lambda_{k-r+1}$.

Now we represent $d$ as the sum of weighted (2,3)-metrics and, possibly, one
cut-metric as follows:

 \begin{numitem1} \label{eq:23metrics}
for $r=1,2,\ldots,\lfloor k/2\rfloor$, define $m_r$ to be the (2,3)-metric on
$V$ determined by the partition $(S^1_r,S^2_r;\, T^1_r,T^2_r,T^3_r)$, where

$S^1_r:=\{x^i_p\, \colon i=1,2,3,\, p=0,1,\ldots,r-1\}$;

$S^2_r:=\{x^i_p\, \colon i=1,2,3,\, p=k-r+1,\ldots,k\}$;

$T^j_r:=\{x^j_p\,\colon p=r,\ldots, k-r\}$,\; $j=1,2,3$.
  \end{numitem1}

\noindent(Recall that the (2,3)-metric determined by a partition $(S^1,S^2;\,
T^1,T^2,T^3)$ of $V$ is the one induced by the map $\gamma:V\to V(K_{2,3})$
with $S^i=\gamma^{-1}(s_i)$ and $T^j=\gamma^{-1}(t_j)$, where $\{s_1,s_2\}$ and
$\{t_1,t_2,t_3\}$ are the color classes of $K_{2,3}$.) Also when $k$ is odd, we
assign as $m_{\lceil k/2\rceil}$ the cut metric on $V$ associated with the cut
$\delta X$ for $X:=\{x^i_p\,\colon i=1,2,3,\, p=0,1,\ldots,\lfloor
k/2\rfloor\}$. Each metric $m_r$ is endowed with the weight $\lambda_r$.

A routine verification shows that
  $$
 d=\lambda_1m_1+\lambda_2m_2+\cdots +\lambda_{\lceil k/2\rceil}m_{\lceil k/2\rceil},
  $$
yielding a solution to PMP for $(G,\ell,\Hscr)$. This completes the proof of
Theorem~\ref{tm:H3alg}.
 \medskip

\noindent\textbf{Acknowledgement.} I thank Maxim Babenko for useful
discussions.

\end{document}